\title{\large\textbf{An exact copositive programming formulation for the Discrete Ordered Median Problem: Extended version}}
\newtheorem{thm}{Theorem}[section]
\newtheorem{propo}{Proposition}[section]
\newtheorem{lemma}{Lemma}[section]
\newtheorem*{proof}{Proof}
\newcommand{\DOMP}{DOMP\xspace}
\newcommand{\y}{\mathcal{O}\xspace}
\newcommand{\X}{\mathcal{X}\xspace}
\newcommand{\red}{\color{red}}
\newcommand{\rev}[1]{#1}
\author{J. Puerto\thanks{IMUS. Universidad de
Sevilla, Spain. e-mail:puerto@us.es}}
\date{\today}
\begin{document}
\maketitle
\begin{abstract}
This paper presents a first continuous, linear, conic formulation for the Discrete Ordered Median Problem (\DOMP). Starting from a binary, quadratic formulation in the original space of location and allocation variables that are common in Location Analysis (L.A.), we prove that there exists a transformation of that formulation, using the same space of variables, that allows us to cast \DOMP as a continuous linear problem in the space of completely positive matrices. This is the first positive result that states equivalence between  the family of continuous convex problems and some hard problems in L.A. The result is of theoretical interest because it allows us to share the tools from continuous optimization to shed new light into the difficult combinatorial structure of the class of ordered median problems.
\end{abstract}

\noindent \textbf{Keywords.} Discrete ordered median problem;  copositive reformulation; conic linear programming.
\medskip

\section{Introduction}
The discrete ordered median problem (\DOMP) represents a generalization of several well-known discrete location problems, such as p-median, $p$-center or $(k_{1}+k_{2})-$trimmed mean, among many others. \DOMP provides a very flexible tool in accommodating actual aspects, as subsidized or compensation costs,  in models of Logistics and Location, see \citet{Puerto1999} and \citet{Nickel2005}.  \rev{The} ordered median objective function computes ordered weighted averages of vectors. \rev{In the case when that objective function is applied to location problems, those vectors correspond to distances or allocation costs from clients to service facilities.} This way many different objective functions are cast within the same framework and the same tools can be used to solve problems with apparently different structure.  The problem was introduced in \citet{Nickel2001} and later studied by \cite{Kalcsics2003}, \cite{Nickel2005}, \citet{Boland2006}, \cite{Stanimirovic2005}, \cite{Marin2009} and \citet{BHP2014}  among many other papers. \DOMP is an NP-hard problem as an extension of the p-median problem.

\cite{Nickel2001} first presented a quadratic integer programming formulation for the \DOMP. However, no further attempt to deal directly with this formulation was ever considered. Furthermore, that approach was never exploited in trying to find alternative reformulations or bounds; instead several linearizations in different spaces of variables have been proposed to solve \DOMP some of them  being rather promising, \citet{Marin2009} and \citet{Labbe2016}.

Motivated by the recent advances in conic optimization and the new tools that this branch of mathematical programming has provided for developing bounds and approximation algorithms for NP-hard problems, as for instance \rev{the max-cut problem and the quadratic assignment problem (QAP)} among others \citep{Burer2015}, we would like to revisit that earlier approach to \DOMP with the aim of proposing an alternative reformulation \rev{of the problem} as a continuous, linear conic optimization problem. Our interest is mainly theoretical and  tries to borrow tools from continuous optimization to be applied in some discrete problems in the field of \rev{Location Analysis (L.A.).}  To the best of our knowledge reformulations of that kind  have never been studied before for \DOMP nor even in the wider field of L.A.

The goal of this paper is to prove that a natural binary, quadratically constrained, quadratic formulation for \DOMP \,  \rev{ admits a reformulation as a continuous, linear optimization problem} over the cone of completely positive matrices $\mathcal{C}^*$.  Recall that a symmetric matrix $M\in \mathbb{R}^{n\times n}$ is called completely positive if it can be written as $M=\sum_{i=1}^k x_i x_i^t$ for some finite $k\in \mathbb{N}$ and \rev{$x_i\in \mathbb{R}^n_+$} for all $i=1,\ldots,k$.

The paper is organized as follows. Section \ref{ss:def} formally defines the ordered median problem and sets its elements. Section \ref{ss:sorting} is devoted to describe a folk result that formulates the problem of sorting numbers as a feasibility binary linear program. This is used later as a building block to present the binary quadratic, quadratically constrained formulation of \DOMP. Section \ref{s:codomp} contains the main result in this paper: \DOMP is equivalent to a continuous, linear conic \rev{optimization} problem. Obviously, there are no shortcuts and the problem remains $NP$-hard but it allows us to shed some light onto the combinatorics of this difficult discrete location problem. Moreover, it permits \rev{to apply also some tools from continuous optimization on} the area of discrete L.A. The last section, namely Section \ref{s:remarks} contains some conclusions and pointers to future research. %The paper finishes with the references cited in the paper.

%Later, we will provide an alternative SDP relaxation that is computable in polynomial time...

\section{Definition and formulation of the problem}
\subsection{Problem definition \label{ss:def}}
Let $S=\{1,...,n\}$ denote the set of $n$ sites. Let $C=(c_{j\ell})_{j,\ell=1,...,n}$ be \rev{a given nonnegative $n\times n$ cost matrix,} where
$c_{j\ell}$ denotes the cost of satisfying demand point (client) $j$ from a facility
located at site $\ell$. We also assume the so called, \textit{free self-service} situation, namely $c_{jj}=0$ for all $j=1,\ldots,n$. Let $p<n$ be the desired number of facilities to be located at the candidate sites. A solution to the facility location problem is given by a set $\X\subseteq S$ of $p$ sites. The set of solutions to the location problem is therefore finite, although exponential in size, and it coincides with the $n \choose p$ subsets of size $p$ of $S$.

We assume, that each new facility has unlimited capacity. Therefore, each client $j$ will be allocated to a facility located at site $\ell$ of $\X$ with lowest cost, i.e.
$$
c_{j}=c_{j}(\X):=\min_{\ell\in \X} c_{j\ell}.
$$

The costs for supplying clients, $c_{1}(\X), . . . , c_{n}(\X)$, are sorted in nondecreasing order. We define $\sigma_{\X}$ to be a permutation on $\{1, . . . , n\}$ for which the inequalities
$$
c_{\sigma_{\X}(1)}(\X)\leq\cdots\leq c_{\sigma_{\X}(n)}(\X)
$$
hold.

Now, for any nonnegative vector  $\lambda\in \mathbb{R}^n_+$, \DOMP consists of finding $\X^*\subset S$ with $|\X^*|=p$ such that:
$$\sum_{k=1}^n \lambda_k c_{\sigma_{\X^*}(k)}(\X^*)=\min _{\X\subset S, |\X|=p} \sum_{k=1}^n \lambda_k c_{\sigma_{\X}(k)}(\X).$$

\subsection{Sorting as an integer program and a binary quadratic programming formulation of \DOMP\label{ss:sorting}}
For the sake of readability, we describe in the following a folk result that allows us to understand better the considered binary quadratic programming formulation for \DOMP.

Let us assume that we are given $n$ real numbers $r_1, \ldots, r_n$ and that we are interested in finding its sorting in nondecreasing sequence as a solution of a mathematical program. One natural way to do it is to identify the permutation that ensures such a sorting.

We can introduce the following \textit{ordering} variables
$$
P_{jk} =\left\{
        \begin{array}{ll}
          1\quad \text{ if the number $r_j$ is sorted in the $k$-th position}, \; j,k =1,\ldots,n, \\
          0\quad \text{ otherwise}.
        \end{array}
      \right.
$$
Next, we can consider the following feasibility problem:
\begin{align}
  \min \quad & 1 \nonumber \tag{\textbf{SORT}}\\
  \mbox{s.t. } & \sum_{j=1}^{n} P_{jk} = 1&   \forall k&=1,\dots,n \label{sort:const1}\\
 & \sum_{k=1}^{n} P_{jk} = 1&   \forall j&=1,\dots,n \label{sort:const2}\\
 &  \sum_{j=1}^n P_{jk} r_j   \leq \sum_{j=1}^n P_{j,k+1}
 r_j& \forall k & =1,\ldots, n-1  \label{sort:const3} \\
 & P_{jk} \in \{0,1\}&    \forall j,k&=1,\dots,n. \nonumber
\end{align}
Clearly, constraints (\ref{sort:const1}) and (\ref{sort:const2}) model permutations and constraint (\ref{sort:const3}) ensures the appropriate order in any feasible solution since the number in sorted position $k$ must be smaller than or equal to the one in position $k+1$.

Now, we can insert in this formulation the elements of \DOMP. Indeed, we have to replace the given real numbers $r_1,\dots,r_n$ by the allocation costs induced by the location problem. Let $X$ and $\y$ be the natural allocation and location variables in the location problem, namely
$$
X_{j\ell} =\left\{
        \begin{array}{ll}
          1\quad \text{  if the demand point $j$ goes to the facility $\ell$}, \; j,\ell =1,\ldots,n,\\
          0\quad \text{ otherwise},
        \end{array}
      \right.
$$
\noindent and
$$
\y_{\ell} =\left\{
        \begin{array}{ll}
          1\quad \text{ if a new facility is opened at site $\ell$,}\;  \ell=1,...,n, \\
          0\quad \text{ otherwise}.
        \end{array}
      \right.
$$

Based on the above variables, it follows that the cost induced by the allocation of the demand point $j$ will be given by $\sum_{\ell =1}^n c_{j\ell}X_{j\ell}$. Thus, if we want a valid formulation of \DOMP we can embed the location elements into the SORT formulation. This results in the following formulation (see \citep{Nickel2001}):

\begin{align}
\min & \displaystyle \sum_{k=1}^n \lambda_k \sum_{j=1}^n P_{jk}\sum_{\ell =1} ^n c_{j \ell} X_{j\ell}  \label{pro:comb-ofp}\tag{MIQP-DOMP}\\
s.t. & \displaystyle\sum_{j=1}^n P_{jk}=1 & \forall k=1,\dots,n \label{pro:comb1p}\\
&\displaystyle\sum_{k=1}^n P_{jk}=1 & \forall j=1,\dots,n \label{pro:comb2p}\\
&\displaystyle\sum_{j=1}^{n}\left(\sum_{\ell=1}^{n}c_{j\ell} X_{j\ell}\right)P_{jk} \le \sum_{j=1}^{n}\left(\sum_{\ell=1}^{n}c_{j\ell} X_{j\ell}\right)P_{jk+1},& \forall k=1,\ldots,n-1, \label{pro:comb3p}\\
%\end{align}
%\begin{align}
&\displaystyle\sum_{\ell=1}^n \y_{\ell}=p \label{pro:comb4p}\\
&\displaystyle\sum_{\ell=1}^n X_{j\ell}=1 & \forall j=1,\dots,n \label{pro:comb5p}\\
&\y_{\ell}\geq X_{j\ell} & \forall j,\ell=1,\dots,n \label{pro:comb6p}\\
&\rev{P_{jk},X_{j\ell},\y_j \in\{0,1\},} & \forall
j,k,\ell=1,\dots,n \label{pro:combdp}
\end{align}

Clearly, the objective function accounts for the ordered weighted sum of the allocation costs so that the cost sorted in position $k$, in a given feasible solution, is multiplied by $\lambda_k$ for all $k=1,\ldots,n$. Constraint (\ref{pro:comb1p}) states that each position in the sequence from $1$ to $n$ will be occupied by one of the realized allocation costs; whereas constraint (\ref{pro:comb2p}) ensures that each allocation cost will be in one position between $1,\ldots,n$. Constraint (\ref{pro:comb3p}) states that the cost in position $k$ must be less than or equal to the one in position $k+1$. With constraint (\ref{pro:comb4p}) it is fixed that $p$ facilities are open. Constraint (\ref{pro:comb5p}) forces that each demand point is allocated to one facility and constraint (\ref{pro:comb6p}) ensures that demand points can be assigned only to open facilities. Finally, (\ref{pro:combdp}) sets the range of the variables.

Based on our discussion, the above quadratically constrained, quadratic objective function is a valid formulation for \DOMP.
We note in passing that one could relax the binary definition of the variables $\y_j\in \{0,1\}$ to simply nonnegativity, namely $\y_j\ge 0$, \rev{since boundedness of $\y$ is ensured by (\ref{pro:comb4p}) and its binary character is forced by (\ref{pro:comb6p}). Therefore, in what follows $\y$ variables will be considered just nonnegative.}

\section{A completely positive reformulation of \DOMP\label{s:codomp}}
The goal of this section is to developed a new  reformulation for \DOMP as a continuous linear \rev{optimization} problem in the cone of completely positive matrices $\mathcal{C}^*$. For this reason, we consider $X$, $P$ and $\y$ as the original variables of the problem. These variables are standard in the already known formulations for \DOMP.
%\sout{The $X$ and $P$  variables are known as the \textit{allocation} and \textit{ordering} variables, respectively. The $\y$ vector contains the \textit{location} variables.}

For ease of presentation we use the following notation. By $P_{i\cdot}$ and $X_{i\cdot}$ for all $i=1,\ldots,n$, we refer, respectively, to the $i-th$ row of $P$ and $X$. That is, \rev{the row vector} $P_{i\cdot}=(P_{i1},\ldots,P_{in})$ and \rev{the row vector} $X_{i\cdot}=(X_{i1},\ldots,X_{in})$. We will also follow the following convention. We refer to original variables in capital letters. Slack variables used to transform inequalities to equations, in the formulations, will be denoted by the Greek letters $\zeta$, $\xi$, $\eta$. Finally, \rev{the elements of} matrix variables will be denoted by small letters. For instance, if $U=(u_{ij})$ is a matrix variable we refer to its elements as $u_{ij}$. In the following the \textit{vec} operator stacks in a column vector the columns of a matrix and the \textit{rvec} operator stacks in a column vector the rows of a matrix. Clearly,  $rvec(A)=vec(A^T)$, i.e. \textit{rvec} acting on a matrix $A$ is equivalent to $vec$ acting on the transpose of $A$. \rev{The $Diag$ operator maps a $n-$vector to a $n\times n$ diagonal matrix and the $diag$ operator extracts in a $n$-vector the diagonal of a $n\times n$-square matrix.} We follow the convention that all \rev{single-index} variables in our formulations are column vectors.

In the sequel rather than considering the standard version of \DOMP in the literature of L.A., described above, we will consider an extended version of that problem because it permits to include more realistic aspects in the model. These modeling aspects come from the interaction between the different elements in the problems. \rev{That is, this extended model} includes some extra interaction terms among  the ordering variables ($P_{jk}P_{j'k'}$) and the allocation variables ($X_{j\ell}X_{pq}$). Let $D=(d_{jkj'k'})\in \mathbb{R}^{n^2\times n^2}$, where $d_{jkj'k'}$ is the interaction cost \rev{of} sorting demand point $j$ in position $k$ and demand point $j'$ in position $k'$. Moreover, let $H=(h_{j\ell pq})\in \mathbb{R}^{n^2\times n^2}$, where $h_{j\ell pq}$ is the cost incurred due to the allocation of demand points $j$ and $p$ to facilities $\ell$ and $q$, respectively, see e.g. \citet{BURKARD1984283}. The reader should observe that in the standard version of \DOMP in the literature the cost matrices $D$ and $H$ satisfy \rev{$D=H=\Theta$, where $\Theta$ is the zero matrix.}

In spite of the greater complexity coming from including new quadratic terms in the objective function, the tools that we will use in the following  can be accommodated to the extended version of the problem with no extra effort.

The above discussion means that in what follows we will consider the following objective function for \ref{pro:comb-ofp}.

\begin{align}
\min & \displaystyle \sum_{k=1}^n \lambda_k \sum_{j=1}^n P_{jk}\sum_{\ell =1} ^n c_{j \ell} X_{j\ell} +1/2 \sum_{j=1}^n\sum_{k=1}^n\sum_{j'=1}^n\sum_{k'=1}^n  d_{jkj'k'}P_{jk}P_{j'k'} \nonumber \\ & +1/2\sum_{j=1}^n\sum_{\ell=1}^n\sum_{p=1}^n\sum_{q=1}^n  h_{j\ell pq}X_{j\ell}X_{pq} \label{pro:comb-ofp-ext}
\end{align}

Clearly, the objective function (\ref{pro:comb-ofp-ext}) accounts for the ordered weighted sum of the allocation costs so that the cost sorted in position $k$, in a given feasible solution, is multiplied by $\lambda_k$ for all $k=1,\ldots,n$, plus the interaction costs induced by the ordering and the allocations. This objective function replaces the usual one in \ref{pro:comb-ofp} and allows us a more general analysis of the problem.

Our goal is to find new ways to represent the problem that \rev{may shed some light into the structure of the difficult combinatorics of \DOMP that combine elements from the p-median, the quadratic assignment (QAP) and the permutation polytopes}.  We  show in the following that it is possible to cast \DOMP within the field of linear continuous conic  optimization and we derive an explicit reformulation.

Furthermore, we emphasize that this  result is not straightforward. \rev{Contrary to the case of  quadratic objective and linear constrained problems with some binary variables, where one can apply  Burer's results \citep[Theorem 2.6]{Burer09}; the above formulation is quadratic in the objective, in the constraints and also includes binary variables; and in our case the most recent, known sufficient conditions for obtaining conic reformulations do not directly apply \citep{Burer2012,BURER2012203,Pena2015,Bai2016}.} In all cases, those results require some nonnegativity condition of the considered quadratic constraints on the feasible region of the problem. \rev{This condition cannot be ensured in our problem since the constraints (\ref{pro:comb3p}), namely $\displaystyle\sum_{j=1}^{n}\left(\sum_{\ell=1}^{n}c_{j\ell} X_{j\ell}\right)P_{jk} \le \sum_{j=1}^{n}\left(\sum_{\ell=1}^{n}c_{j\ell} X_{j\ell}\right)P_{jk+1},\; \forall k=1,\ldots,n-1$, can assume any sign on the feasible solutions of the problem.}  In spite of that, we prove in our main result that such a \rev{reformulation} does exist for \DOMP. Thus, it links this problem to the general theory of convex conic programming as a new instance of this broad class of problems. \rev{As a result, any new development arising in that optimization field (convex conic programming), could be readily  transferred  to the combinatorial problem advancing in its better understanding.} The interested reader is referred to \cite{Dur10}, \cite{Bomze12}, \cite{Burer2012} and \cite{XuBurer2016} for a list of \rev{combinatorial} problems that also fits to this framework.

In our approach we first reformulate \ref{pro:comb-ofp} as another quadratically constrained, quadratic problem that is instrumental in the construction of \rev{the final copositive reformulation for \DOMP, that is the main goal of this paper.}

\rev{We start by slightly modifying the formulation above to replace the set of inequalities (\ref{pro:comb6p}) by an equivalent set with smaller cardinality.}

\begin{propo} \label{prop:surrogate}
Let us assume that the set of inequalities (\ref{pro:comb1p})-(\ref{pro:combdp}) hold. Then, the set of inequalities (\ref{pro:comb6p}) can be replaced by
\begin{equation}
\rev{\sum_{j=1}^n X_{j\ell}-(n-p +1)\y_{\ell}  \le 0,\;   \quad \forall \ell=1,\ldots,n  } \label{pro:comb6pmod}
\end{equation}
resulting in the same feasible set.
\end{propo}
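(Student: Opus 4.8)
The plan is to prove the two inclusions between the feasible set cut out by (\ref{pro:comb1p})--(\ref{pro:combdp}) and the one obtained after swapping (\ref{pro:comb6p}) for (\ref{pro:comb6pmod}). Since every other constraint is kept verbatim, it suffices to show that, under (\ref{pro:comb4p}), (\ref{pro:comb5p}) and the integrality restrictions (\ref{pro:combdp}), the block (\ref{pro:comb6p}) and the single inequality per facility (\ref{pro:comb6pmod}) have exactly the same solutions.

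I would first settle the implication (\ref{pro:comb6pmod}) $\Rightarrow$ (\ref{pro:comb6p}), which only uses integrality. Fix $j$ and $\ell$. If $X_{j\ell}=0$, the inequality $\y_\ell\ge X_{j\ell}$ holds because $\y_\ell\ge 0$. If $X_{j\ell}=1$, then $\sum_{j'=1}^n X_{j'\ell}\ge 1$, so (\ref{pro:comb6pmod}) gives $(n-p+1)\y_\ell\ge 1$, hence $\y_\ell>0$; since $\y_\ell\in\{0,1\}$ this forces $\y_\ell=1=X_{j\ell}$. Thus (\ref{pro:comb6p}) holds.

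For the reverse implication (\ref{pro:comb6p}) $\Rightarrow$ (\ref{pro:comb6pmod}) I would fix $\ell$ and split on the value of $\y_\ell$. If $\y_\ell=0$, then (\ref{pro:comb6p}) forces $X_{j\ell}=0$ for every $j$, so $\sum_{j}X_{j\ell}=0=(n-p+1)\y_\ell$. If $\y_\ell=1$, I must show that facility $\ell$ receives at most $n-p+1$ demand points. This is where the free self-service hypothesis $c_{jj}=0$ enters: in any solution of \DOMP a client located at an open site is served there at zero cost, so $X_{\ell'\ell'}=\y_{\ell'}$ for every $\ell'$; in particular each open facility absorbs at least its own demand point. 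By (\ref{pro:comb4p}) there are exactly $p$ open facilities, and by (\ref{pro:comb5p}) the $n$ demand points are partitioned among them, so the $p-1$ open facilities distinct from $\ell$ take at least $p-1$ of them, leaving at most $n-(p-1)=n-p+1$ for $\ell$; with $\y_\ell=1$ this is exactly (\ref{pro:comb6pmod}).

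The main obstacle is precisely this last step. Unlike the purely algebraic first inclusion, the bound $n-p+1$ does not follow from (\ref{pro:comb1p})--(\ref{pro:comb5p}) and (\ref{pro:combdp}) alone: from those one only gets the weaker estimate $\sum_{j}X_{j\ell}\le n\,\y_\ell$. It genuinely rests on the self-service structure, equivalently on the fact that in every solution one may assume $X_{\ell\ell}=\y_\ell$, so that each open facility is used by at least one client. Once that is granted, the counting argument together with the partition constraint (\ref{pro:comb5p}) closes the argument, and the two inclusions yield the asserted identity of feasible sets.
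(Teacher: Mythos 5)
Your proof is correct and follows essentially the same route as the paper's: the direction from (\ref{pro:comb6pmod}) to (\ref{pro:comb6p}) via integrality of $\y_{\ell}$, and the converse via the free self-service observation that each of the $p$ open facilities serves its own demand point, so at least $p-1$ clients are unavailable to facility $\ell$, leaving at most $n-p+1$. Both your argument and the paper's rely at this step on $X_{\ell'\ell'}=\y_{\ell'}$, which is a property of (suitably chosen) optimal solutions rather than of every feasible point of (\ref{pro:comb1p})--(\ref{pro:combdp}); you at least flag this dependence explicitly, which is a small point in your favor.
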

\begin{proof}
{\rm Indeed, let us assume that (\ref{pro:comb6p}) holds together with the remaining inequalities in the formulation \ref{pro:comb-ofp}. We shall prove that (\ref{pro:comb6pmod}) also holds.
We sum $X_{j\ell}\le \y_{\ell}$ for all $j=1...,n$ which results in
\begin{equation}
\sum_{j=1}^n X_{j\ell}\le n\y_{\ell}. \label{expl:1}
\end{equation}

Next, we observe that since we assume all costs to be nonnegative and there is no capacity constraints for the allocation of demand points to facilities then for each $\y_{\hat \ell}=1$ the corresponding $X_{\hat \ell \hat \ell}=1$ as well; that is, the demand point at $\hat \ell$ will be served by the facility at the same location. Since there are $p$ open facilities, there exist $\y_{\ell_1}=1,\ldots,\y_{\ell_p}=1$. This implies that in any sum of the form $\sum_{j=1}^n X_{j\ell}\le n\y_{\ell}$ there must be at least $p-1$ variables $X_{\ell_j\ell}=0$, {\red if} $\ell_j\neq \ell$, since by (\ref{pro:comb4p}) there are, at least, $p-1$ open facilities different from $\ell$ (even if facility at $\ell$ happens to be open).

The above argument implies that, whenever $\y_{\ell}=1$ in (\ref{expl:1}), we can subtract $p-1$ units from the right-hand-side of the inequality,
(obviously, this also holds if $\y_\ell=0$) resulting in (\ref{pro:comb6pmod}).

Conversely, let us assume that (\ref{pro:comb6pmod}) replaces (\ref{pro:comb6p}) in the above formulation. We have to prove that (\ref{pro:comb6p}) still holds. Clearly if $\y_{\ell}=0$ then it follows that $X_{j\ell}=0$ for all $j=1,\ldots,n$. Besides, if $X_{j\ell}$=1 for some $j$ then $\y_{\ell}=1$ because otherwise (\ref{pro:comb6pmod}) would not hold. Finally, if $\y_{\ell}=1$ or $X_{j\ell}=0$ for all $j=1,\ldots,n$, (\ref{pro:comb6p}) holds trivially. This proves that (\ref{pro:comb6pmod}) implies (\ref{pro:comb6p}) in the formulation above.}

 \hfill $\Box$
\end{proof}

\rev{In our development we follow the approach by \cite{Burer09} but we need to introduce modifications to go around the problematic quadratic constraint (\ref{pro:comb3p}) that does not fit within that framework. In order to do that, } we introduce an additional set of variables $W_k$, $k=1,\dots,n$ which will represent the value of the cost that is sorted in position $k$. This means that $W_k=\sum_{j=1}^{n}\left(\sum_{\ell=1}^{n}c_{j\ell} X_{j\ell}\right)P_{jk}$ and although these variables are redundant they will simplify the presentation in our approach. \rev{Adding these equations for all $k=1,\ldots,n$, it results in another valid constraint, namely $ \sum_{k=1}^n W_k= \sum_{j=1}^n\sum_{\ell=1}^n c_{j\ell} X_{j\ell}$, that we will also include in our formulation.}

The following result provides a reformulation of \ref{pro:comb-ofp}  that is an important building block in the proof of the main result.

\begin{lemma}
\rev{Problem  \ref{pro:comb-ofp2} is a reformulation of Problem \ref{pro:comb-ofp}.}
%has the same objective value and set of optimal solutions as
%defined below.
\begin{align}
\min & \displaystyle \sum_{k=1}^n \lambda_k W_k +1/2\sum_{j=1}^n\sum_{k=1}^n\sum_{j'=1}^n\sum_{k'=1}^n  d_{jkj'k'}P_{jk}P_{j'k'} \nonumber \\ & +1/2\sum_{j=1}^n\sum_{\ell=1}^n\sum_{p=1}^n\sum_{q=1}^n  h_{j\ell pq}X_{j\ell}X_{pq} \label{pro:comb-ofp2}\tag{MIQP1-DOMP}\\
s.t. &  \eqref{pro:comb1p},\eqref{pro:comb2p}, \eqref{pro:comb4p}, \eqref{pro:comb5p},  \nonumber \\
%\eqref{pro:comb6pmod},  \nonumber \\
%& (\ref{pro:comb3}), (\ref{pro:comb7}), (\ref{pro:comb8}) \label{pro:comb2p2}\\
%& \sum_{k=1}^n W_k-\sum_{j=1}^n \sum_{\ell=1}^n c_{j\ell} X_{j\ell}=0 \label{pro:comb1p7} \\
& \rev{\sum_{j=1}^n X_{j\ell}-(n-p +1)\y_{\ell} +\zeta_\ell = 0,\;   \quad \forall \ell=1,\ldots,n   \label{pro:comb6pmodeq}}\\
& W_k- \displaystyle\sum_{\ell=1}^n c_{j\ell}\, X_{j\ell}+\displaystyle\sum_{\ell =1}^n
c_{j\ell}(1-P_{jk}) - \eta_{jk}=0,\quad \forall
j,k=1,\dots,n \label{pro:comb1p8}\\
& \displaystyle W_k=\sum_{j=1}^n P_{jk}\sum_{\ell =1} ^n c_{j \ell} X_{j\ell},\quad \forall k=1,\ldots,n \label{pro:comb1p2}\\
& \sum_{j=1}^n\sum_{k=1}^n (P_{jk}-P_{jk}^2)=0 \label{pro:comb2p2}\\
& \rev{\sum_{j=1}^n\sum_{\ell=1}^n (X_{j\ell}-X_{j\ell}^2)=0 \label{pro:combq2p2}}\\
& W_k - W_{k+1} +\xi_k=0,\quad  \forall
k=1,\dots,n-1 \label{pro:comb1p3}\\
& \rev{\displaystyle\sum_{k=1}^n W_k-\displaystyle\sum_{j=1}^n \displaystyle\sum_{\ell =1}^n
c_{j\ell}\, X_{j\ell}=0, \label{pro:comb1p7}}\\
%& W_k- \displaystyle\sum_{\ell=1}^n c_{j\ell}\, X_{j\ell}+\displaystyle\sum_{\ell =1}^n
%c_{j\ell}(1-P_{jk}){\color{red} - \eta_{jk}}=0,\quad \forall
%j,k=1,\dots,n \label{pro:comb1p8} \\
&P_{jk},X_{j\ell}\in\{0,1\}, \; \y_j, W_k, \xi_j,\eta_{jk}, \zeta_\ell \ge 0, \quad \forall
j,k,\ell=1,\dots,n \label{pro:combdp1}
\end{align}

\end{lemma}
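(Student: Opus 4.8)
The plan is to establish an explicit, objective–preserving bijection between the feasible set of \ref{pro:comb-ofp} (now carrying the objective \eqref{pro:comb-ofp-ext} and, by Proposition \ref{prop:surrogate}, with \eqref{pro:comb6p} written as \eqref{pro:comb6pmod}) and that of \ref{pro:comb-ofp2}, after projecting out the auxiliary variables $W,\zeta,\xi,\eta$; this gives equality of the optimal values together with a one-to-one correspondence of optimizers, which is the meaning of ``reformulation'' here. The guiding remark is that the objective \eqref{pro:comb-ofp2} is obtained from \eqref{pro:comb-ofp-ext} merely by substituting $W_k$ for $\sum_j P_{jk}\sum_\ell c_{j\ell}X_{j\ell}$, i.e.\ by the defining equation \eqref{pro:comb1p2}; and that the genuinely new constraints \eqref{pro:comb1p8}, \eqref{pro:comb2p2}, \eqref{pro:combq2p2}, \eqref{pro:comb1p7}, \eqref{pro:comb1p3} are, at the binary level, either logically implied by the original ones or mere rewritings of them. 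Hence the whole argument reduces to a feasibility check in each direction plus the observation that the auxiliary variables are uniquely pinned down by $(P,X,\y)$.

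First I would take a feasible $(P,X,\y)$ of \ref{pro:comb-ofp}, keep it, and set $W_k:=\sum_j P_{jk}\sum_\ell c_{j\ell}X_{j\ell}$ together with the slacks $\zeta_\ell:=(n-p+1)\y_\ell-\sum_j X_{j\ell}$, $\xi_k:=W_{k+1}-W_k$ and $\eta_{jk}:=W_k-\sum_\ell c_{j\ell}X_{j\ell}+\sum_\ell c_{j\ell}(1-P_{jk})$. Then \eqref{pro:comb1p}, \eqref{pro:comb2p}, \eqref{pro:comb4p}, \eqref{pro:comb5p}, \eqref{pro:comb6pmodeq}, \eqref{pro:comb1p2} and \eqref{pro:comb1p3} hold by construction; \eqref{pro:comb1p7} follows by summing \eqref{pro:comb1p2} over $k$ and using \eqref{pro:comb2p}; and \eqref{pro:comb2p2}, \eqref{pro:combq2p2} hold because $P$ and $X$ are $0/1$. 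The nonnegativity of the slacks is checked as follows: $\zeta_\ell\ge 0$ is exactly \eqref{pro:comb6pmod}; $\xi_k\ge 0$ is \eqref{pro:comb3p} rewritten through \eqref{pro:comb1p2}; and for $\eta_{jk}$ I would split on $P_{jk}$ — if $P_{jk}=1$ then by \eqref{pro:comb1p} and binarity of $P$ one has $W_k=\sum_\ell c_{j\ell}X_{j\ell}$, whence $\eta_{jk}=0$, while if $P_{jk}=0$ then $\eta_{jk}=W_k+\sum_\ell c_{j\ell}(1-X_{j\ell})\ge 0$ using $W_k\ge 0$, $C\ge 0$ and $\sum_\ell X_{j\ell}=1$. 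Finally \eqref{pro:comb1p2} makes the two objective values coincide.

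For the converse I would start from a feasible point of \ref{pro:comb-ofp2} and discard $W,\zeta,\xi,\eta$, checking that $(P,X,\y)$ is feasible for \ref{pro:comb-ofp}: the constraints \eqref{pro:comb1p}, \eqref{pro:comb2p}, \eqref{pro:comb4p}, \eqref{pro:comb5p} and the binarity are shared; \eqref{pro:comb6pmodeq} with $\zeta_\ell\ge 0$ yields \eqref{pro:comb6pmod}, hence \eqref{pro:comb6p} by Proposition \ref{prop:surrogate}; and \eqref{pro:comb1p2} together with \eqref{pro:comb1p3} and $\xi_k\ge 0$ give $\sum_j P_{jk}\sum_\ell c_{j\ell}X_{j\ell}=W_k\le W_{k+1}=\sum_j P_{j,k+1}\sum_\ell c_{j\ell}X_{j\ell}$, i.e.\ \eqref{pro:comb3p}; the objective values agree once more by \eqref{pro:comb1p2}. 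Since $W$ (via \eqref{pro:comb1p2}), $\zeta$, $\xi$ and $\eta$ are each explicit functions of $(P,X,\y)$, the two maps are mutually inverse, so \ref{pro:comb-ofp2} is indeed a reformulation of \ref{pro:comb-ofp}.

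I do not expect a serious obstacle: the construction is essentially bookkeeping once one notices that \eqref{pro:comb1p8}, \eqref{pro:comb2p2}, \eqref{pro:combq2p2} and \eqref{pro:comb1p7} are all implied by the remaining constraints. The single delicate point is the sign of $\eta_{jk}$ in the forward direction, which is precisely where the standing hypothesis $C\ge 0$ (and the induced $W_k\ge 0$) enters; these four ``redundant'' constraints are kept deliberately, since they will cease to be redundant in the main theorem, where they are exactly what enforces the $0/1$ character of $P,X$ and the intended meaning of $W$ after relaxation to the completely positive cone $\mathcal{C}^*$.
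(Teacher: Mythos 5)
Your proposal is correct and follows essentially the same route as the paper's proof: replace \eqref{pro:comb6p} by the surrogate constraint \eqref{pro:comb6pmodeq} via Proposition \ref{prop:surrogate}, introduce $W$ through \eqref{pro:comb1p2}, rewrite \eqref{pro:comb3p} as \eqref{pro:comb1p3} with a nonnegative slack, note that \eqref{pro:comb2p2}, \eqref{pro:combq2p2} and \eqref{pro:comb1p7} are redundant for binary $P,X$, and justify \eqref{pro:comb1p8} by exactly the paper's case split on $P_{jk}$ using $W_k\ge 0$, $C\ge 0$ and \eqref{pro:comb1p}. Packaging the argument as an explicit objective-preserving bijection, with the auxiliaries $W,\zeta,\xi,\eta$ uniquely pinned down by $(P,X,\y)$, is only a presentational difference from the paper's chain of reformulations (and, like the paper, your converse step invokes Proposition \ref{prop:surrogate} even though \eqref{pro:combdp1} keeps $\y$ merely nonnegative rather than binary).
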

\begin{proof}
{\rm
In order to get the formulation in the statement of the lemma, we begin by considering an intermediate reformulation of \ref{pro:comb-ofp} which results by substituting constraints \eqref{pro:comb6p} by the equivalent family  $\sum_{j=1}^n X_{j\ell}- (n-p+1)\y_{\ell}+\zeta_l=0$, $\ell= 1,\ldots,n$, namely \eqref{pro:comb6pmodeq}, that was already proved in Proposition \ref{prop:surrogate} and augmenting the variables $W_k=\sum_{j=1}^n P_{jk}\sum_{\ell =1} ^n c_{j \ell} X_{j\ell}$, for all $k=1,\ldots,n$ \eqref{pro:comb1p2}. \rev{In addition, we add three redundant constraints. The first two are quadratic, namely  $\sum_{j=1}^n\sum_{k=1}^n (P_{jk}-P_{jk}^2)=0$ \eqref{pro:comb2p2} and $\sum_{j=1}^n\sum_{\ell=1}^n (X_{j\ell}-X_{j\ell}^2)=0$ \eqref{pro:combq2p2};  and the third is linear $\sum_{k=1}^n W_k-\sum_{j=1}^n \sum_{\ell =1}^n c_{j\ell}\, X_{j\ell}=0$ \eqref{pro:comb1p7}. These three constraints will be instrumental in our developments.} All the above changes do not alter the optimal solutions of \ref{pro:comb-ofp}.

We write the new problem explicitly for the sake of readability. Starting from \ref{pro:comb-ofp}, with the new objective function (\ref{pro:comb-ofp-ext}), we perform the transformation described above and the new formulation is:
\begin{align}
\min & \displaystyle \sum_{k=1}^n \lambda_k W_k +1/2\sum_{j=1}^n\sum_{k=1}^n\sum_{j'=1}^n\sum_{k'=1}^n  d_{jkj'k'}P_{jk}P_{j'k'} \nonumber \\ & +1/2\sum_{j=1}^n\sum_{\ell=1}^n\sum_{p=1}^n\sum_{q=1}^n  h_{j\ell pq}X_{j\ell}X_{pq} \label{pro:comb-ofp1}\tag{MIQP2-DOMP}\\
s.t. &  \eqref{pro:comb1p},\eqref{pro:comb2p}, \eqref{pro:comb3p}, \eqref{pro:comb4p}, \eqref{pro:comb5p}, \eqref{pro:comb6pmodeq},\eqref{pro:comb1p2}, \eqref{pro:comb2p2}, \eqref{pro:combq2p2}, \eqref{pro:comb1p7}, \nonumber \\
&P_{jk},X_{j\ell},\y_j\in\{0,1\}, \; W_k,\zeta_{\ell} \ge 0 & \forall
j,k,\ell=1,\dots,n \label{pro:combdp1}
\end{align}
\ref{pro:comb-ofp1} is a mixed-$\{0,1\}$ quadratically constrained, quadratic objective \rev{problem}. From this formulation, we can obtain the one in the statement of the lemma by:  1) reformulating constraints (\ref{pro:comb3p}) as $W_k- W_{k+1}-\xi_k=0$ for all $k=1,\ldots,n-1$, \eqref{pro:comb1p3},
%2) {\red augmenting} the redundant constraint (\ref{pro:comb1p7}), namely
%$\sum_{k=1}^n W_k-\sum_{j=1}^n \sum_{\ell=1}^n c_{j\ell} X_{j\ell}=0 $,
and; 2) \rev{adding to the formulation  the valid inequalities}  (\ref{pro:comb1p8}), i.e
$
W_k- \displaystyle\sum_{\ell=1}^n c_{j\ell}\, X_{j\ell}+\displaystyle\sum_{\ell =1}^n
c_{j\ell}(1-P_{jk}) - \eta_{jk}=0,\quad \forall
j,k=1,\dots,n$.

%We {\red observe} that (\ref{pro:comb1p7}) follows from adding (\ref{pro:comb1p2}) for all $k=1,\ldots,n$ and therefore it is indeed redundant.
We observe that the constraint (\ref{pro:comb1p8}) is valid by the following argument. First of all, if $P_{jk}=0$ then (\ref{pro:comb1p8}) reduces to $W_k\ge \sum_{\ell=1}^n c_{j\ell} (X_{j\ell}-1)$ which always holds since $W_k\ge 0$ and $X_{j\ell}\le 1$ for all $\ell=1,\ldots,n$.
Second, if $P_{jk}=1$ then the allocation cost of demand point $j$ goes in sorted position $k$ ($P_{jk}$ are the ordering variables) and then $W_k$ must be the allocation cost of the actual allocation of demand point $j$ to the open facility $\ell$, namely $W_k\ge \sum_{\ell=1}^k c_{j\ell} X_{j\ell}$.

Augmenting these constraints to \ref{pro:comb-ofp1}, it results exactly in \ref{pro:comb-ofp2} and the result follows.

 \hfill $\Box$
}

\end{proof}

Additionally and for the sake of readability we will introduce, in the sequel, the following notation.
We set the entire family of variables of the problem \ref{pro:comb-ofp2} as the column vector $\phi$ given by
\begin{equation} \label{def:phi}
 \phi^T=(rvec(P)^T,rvec(X)^T,\y^T,W^T,\xi^T,rvec(\eta)^T,\zeta^T).
\end{equation}
Recall that we assume that all single-index variables, namely $\y,W,\xi,\zeta$, are column vectors.
Furthermore, to alleviate the notation, we will denote the  vector of coefficients of a constraint numbered as $(\#)$, in the formulation \ref{pro:comb-ofp2}, by $[a_{(\#)}]$. If the reference $(\#)$ defines, for instance, a set of constraints for all $\ell=1,\ldots,n$ then we refer to the $\ell-th$ constraint in that set by $[a_{(\#)}]_\ell$. For instance, $[a_{(\ref{pro:comb1p})}]_1$ refers to the first equation of constraints (\ref{pro:comb1p}) which is $\sum_{j=1}^n P_{j1}=1$.

Next, we consider the matrix form $\Phi=\phi\phi^T$ that is symmetric and its upper triangular part is given by:
\begin{equation} \label{eq:Phi}
\begin{tabular}{rl}
& \begin{tabular}{p{8mm}p{8mm}p{8mm}p{8mm}p{6mm}p{8mm}p{6mm}}
\hspace*{4mm} $n^2$ & \hspace*{1mm} $n^2$ & \hspace*{1mm}  $n$ & \hspace*{-2mm} $n$ & \hspace*{-4mm} $n$ & \hspace*{-4mm} $n^2$ & \hspace*{-5mm} $n$
\end{tabular} \\
$\Phi=$ & $\left[\begin{array}{ccccccc}
{\overbrace{Q}} & \overbrace{V} & \overbrace{P\y} & \overbrace{PW} & \overbrace{P\xi} & \overbrace{P\eta} & \overbrace{P\zeta} \\
 & U &X\y & XW & X\xi & X\eta & X \zeta \\
 & & \Sigma & \y W & \y \xi & \y \eta & \y \zeta \\
 & & &\Omega & W \xi & W \eta & W \zeta \\
 & & & & \Psi & \xi \eta & \xi \zeta \\
 & & & & & \Pi & \eta \zeta  \\
 & & & & & & \mathcal{Z}
\end{array}\right] \in \mathbb{R}^{(3n^2+4n)\times (3n^2+4n)}$
\end{tabular}
\end{equation}

Each entry in $\Phi$ is a block of variables.
The different  submatrices in $\Phi$ are described below:
%$$Q= rvec(P) rvec(P)^T=\left[ \begin{array}{cccc}
%P_{11}\otimes P_{1.} & P_{11}\otimes P_{2.} & \ldots &P_{11}\otimes P_{n.} \\
%\vdots & \vdots & \vdots & \vdots \\
%P_{21}\otimes P_{1.} & P_{21}\otimes P_{2.} & \ldots &P_{21}\otimes P_{n.} \\
%\vdots & \vdots & \vdots & \vdots \\
%P_{nn}\otimes P_{1.} & P_{nn}\otimes P_{2.} & \ldots &P_{nn}\otimes P_{n.}
%\end{array} \right]=(q_{ikj\ell})\in \mathbb{R}^{n^2\times n^2}.$$

\rev{$$Q= rvec(P) rvec(P)^T=\left[ \begin{array}{c}
rvect(P) P_{11}\\
\vdots \\
rvect(P) P_{nn}
\end{array} \right]=(q_{ikj\ell})\in \mathbb{R}^{n^2\times n^2}.$$
There exist structural conditions on the product of variables $P_{ik}P_{j\ell}$ since $diag(Q)=rvect(P)$.}
%\begin{equation} \label{con:Q}
%q_{ikj\ell}=\left\{ \begin{array}{ll}
%P_{ik} & \mbox{if } (i,k)=(j,\ell) \\
%0 & \mbox{if } (i=j \wedge k\neq \ell) \vee (i\neq j \wedge k=\ell).
%\end{array} \right.
%\end{equation}

The matrix $U$ is analogous replacing $P$ by $X$. Thus,
%$$U= rvec(X) rvec(X)^T=\left[ \begin{array}{cccc}
%X_{11}\otimes X_{1.} & X_{11}\otimes X_{2.} & \ldots &X_{11}\otimes X_{n.} \\
%\vdots & \vdots & \vdots & \vdots \\
%X_{21}\otimes X_{1.} & X_{21}\otimes X_{2.} & \ldots &X_{21}\otimes X_{n.} \\
%\vdots & \vdots & \vdots & \vdots \\
%X_{nn}\otimes X_{1.} & X_{nn}\otimes X_{2.} & \ldots &X_{nn}\otimes X_{n.}
%\end{array} \right]=(u_{ikj\ell})\in \mathbb{R}^{n^2\times n^2}.$$

\rev{$$U= rvec(X) rvec(X)^T=\left[ \begin{array}{c}
rvect(X) X_{11}\\
\vdots \\
rvect(X) X_{nn}
\end{array} \right]=(u_{ikj\ell})\in \mathbb{R}^{n^2\times n^2}.$$
There also exist structural conditions on the product of variables $X_{ik}X_{j\ell}$ since $diag(U)=rvect(X)$.}
%\begin{equation} \label{con:U}
%u_{ikj\ell}=\left\{ \begin{array}{ll}
%X_{ik} & \mbox{if } (i,k)=(j,\ell) \\
%0 & \mbox{if } (i=k \wedge j\neq \ell). %\vee (i\neq j \wedge k=\ell).
%\end{array} \right.
%\end{equation}

Next, the matrix $V$ is
%$$V= rvec(P) rvec(X)^T=\left[ \begin{array}{cccc}
%P_{11}\otimes X_{1.} & P_{11}\otimes X_{2.} & \ldots &P_{11}\otimes X_{n.} \\
%\vdots & \vdots & \vdots & \vdots \\
%P_{21}\otimes X_{1.} & P_{21}\otimes X_{2.} & \ldots &P_{21}\otimes X_{n.} \\
%\vdots & \vdots & \vdots & \vdots \\
%P_{nn}\otimes X_{1.} & P_{nn}\otimes X_{2.} & \ldots &P_{nn}\otimes X_{n.}
%\end{array} \right]=(v_{ikj\ell})\in \mathbb{R}^{n^2\times n^2}.$$

\rev{$$V= rvec(P) rvec(X)^T=\left[ \begin{array}{c}
rvect(X) P_{11}\\
\vdots \\
rvect(X) P_{nn}
\end{array} \right]=(v_{ikj\ell})\in \mathbb{R}^{n^2\times n^2}.$$}
The remaining submatrices that are required because their terms appear in some constraints are defined accordingly:
$$\begin{array}{ll}
 \Sigma= \y\y^T=(\sigma_{ij})\in \mathbb{R}^{n\times n}, & \\ %\mbox{and } diag(\Sigma)=\y,\\
 \Omega=WW^T=(\omega_{ij})\in \mathbb{R}^{n\times n}, & \Psi=\xi\xi^T=(\psi_{ij})\in \mathbb{R}^{n\times n},\\
  \Pi=rvec(\eta)rvec(\eta)^T=(\pi_{ikj\ell})\in \mathbb{R}^{n^2\times n^2}, & \mathcal{Z}=\zeta\zeta^T=(z_{rs})\in \mathbb{R}^{n\times n}, \\
 (P\y)=rvec(P)\y^T=((P\y)_{ij\ell})\in \mathbb{R}^{n^2\times n}, & (PW)=rvec(P)W^T=(\rho_{ikj})\in \mathbb{R}^{n^2\times n}, \\
 (P\xi)=rvec(P)\xi^T=((P\xi)_{ij\ell})\in \mathbb{R}^{n^2\times n}, & (P\eta)= rvec(P)rvec(\eta)^T=(\nu_{ijrs})\in \mathbb{R}^{n^2\times n^2},\\
 (P\zeta)=rvec(P)\zeta^T=((P\zeta)_{ij\ell})\in \mathbb{R}^{n^2\times n}, & \\
 (X\y)=rvec(X)\y^T=(\chi_{ikj})\in \mathbb{R}^{n^2\times n},& (XW)=rvec(X)W^T=(\gamma_{ik\ell})\in \mathbb{R}^{n^2\times n},\\
 (X\eta) =rvec(X)rvec(\eta)^T=(\kappa_{ijrs})\in \mathbb{R}^{n^2\times n^2}, &  (X\zeta)=rvec(X)\zeta^T=(\tau_{ijr})\in \mathbb{R}^{n^2\times n},\\
% \zeta\zeta=rvec(\zeta)rvec(\zeta)^T=(z_{rs})\in \mathbb{R}^{n\times n}, & \eta\eta=rvec(\eta)rvec(\eta)^T=(\pi_{ikj\ell})\in \mathbb{R}^{n^2\times n^2},\\
 (\y\zeta)= \y\zeta^T=(\beta_{ij})\in \mathbb{R}^{n\times n},& \\
 (W\xi)=W\xi^T=(\delta_{ij})\in \mathbb{R}^{n\times n}, & (W\eta)= Wrvec(\eta)^T=(\epsilon_{irs})\in \mathbb{R}^{n\times n^2}.
\end{array}$$
\medskip

%The reader may observe that all the variables that define the submatrices  {\red $\y W$, $\y \xi$, $\y \eta$,} $W\zeta$, $\xi\eta$, $\xi\zeta$ and $\eta\zeta$  do not appear in any constraint of the formulation.\\
Finally, consider the coefficient matrix $G$ and the vector $g$ indexed in the same basis as $\phi$.
\medskip
$$ \begin{array}{ccc}
G=\left[ \begin{array}{ccccccc}
D & F & \Theta & \Theta & \Theta & \Theta & \Theta \\
F^T & H & \Theta & \Theta & \Theta & \Theta & \Theta \\
\Theta & \Theta & \Theta & \Theta & \Theta & \Theta & \Theta \\
\Theta & \Theta & \Theta & \Theta & \Theta & \Theta & \Theta \\
\Theta & \Theta & \Theta & \Theta & \Theta & \Theta & \Theta \\
\Theta & \Theta & \Theta & \Theta & \Theta & \Theta & \Theta \\
\Theta & \Theta & \Theta & \Theta & \Theta & \Theta & \Theta \end{array} \right] & \mbox{ where } F= &
\begin{array}{c}
 \left[\begin{array}{cccc}
\stackrel{X_{1\cdot}}{\overbrace{\lambda_1\otimes C_{1\cdot}}}& \stackrel{X_{2\cdot}}{\overbrace{\Theta}} & \ldots & \stackrel{X_{n\cdot}}{\overbrace{\Theta}}\\
\vdots & \vdots & \ddots &\vdots \\
\lambda_n\otimes C_{1\cdot} & \Theta & \ldots & \Theta \\
\Theta &\lambda_1\otimes C_{2\cdot} & \ldots & \Theta \\
\vdots &  \vdots & \ddots & \vdots \\
\Theta & \lambda_n\otimes C_{2\cdot} & \ldots & \Theta \\
\Theta & \Theta & \ldots & \lambda_1\otimes C_{n\cdot} \\
\vdots &  \vdots & \ddots & \vdots \\
\Theta & \Theta & \ldots & \lambda_n\otimes C_{n\cdot}
\end{array} \right]
\end{array}
\end{array}
$$
\medskip
and
$$ g^T=[\; \stackrel{P}{\overbrace{\Theta}},\stackrel{X}{\overbrace{\Theta}},\stackrel{\y}{\overbrace{\Theta}},
\stackrel{W}{\overbrace{\lambda^T}},\stackrel{\xi}{\overbrace{\Theta}},\stackrel{\eta}{\overbrace{\Theta}},\stackrel{\zeta}{\overbrace{\Theta}}\; ].$$
\rev{Recall that, as already introduced in Section \ref{s:codomp}, $\Theta$ stands for the matrix of the adequate size with all its entries equal to zero.}
\medskip

Clearly,
\begin{align}
1/2 \langle G, \Phi \rangle=1/2trace(G\Phi)& =\displaystyle \sum_{k=1}^n \lambda_k \sum_{j=1}^n P_{jk}\sum_{\ell =1} ^n c_{j \ell} X_{j\ell} +1/2\sum_{j=1}^n\sum_{k=1}^n\sum_{j'=1}^n\sum_{k'=1}^n  d_{jkj'k'}P_{jk}P_{j'k'} \nonumber \\
& +1/2\sum_{j=1}^n\sum_{\ell=1}^n\sum_{p=1}^n\sum_{q=1}^n  h_{j\ell pq}X_{j\ell}X_{pq}, \label{eq:ofp2}
\end{align}
is the objective function of Problem \ref{pro:comb-ofp2}. Analogously,
\begin{equation} \label{eq:of-con-g}
 \langle F, V\rangle=\rev{trace(FV)=} \sum_{k=1}^n \lambda_k \sum_{j=1}^n P_{jk}\sum_{\ell =1} ^n c_{j \ell} X_{j\ell}=\; g^T W,
\end{equation}
that results in the first term in the objective function of the same problem, namely, \ref{pro:comb-ofp2}.

Let $\mathcal{L}$ be \rev{the linear constraints} of the feasible region of \ref{pro:comb-ofp2}, namely
\begin{equation} \label{eq:L}
\rev{\mathcal{L}=\{\phi\ge 0: \phi \mbox{ satisfies } \eqref{pro:comb1p},\eqref{pro:comb2p}, \eqref{pro:comb4p}, \eqref{pro:comb5p}, \eqref{pro:comb6pmodeq}, %\eqref{pro:comb1p7},
\eqref{pro:comb1p8}, \eqref{pro:comb1p3} \mbox{ and }  \eqref{pro:comb1p7}\},}
\end{equation}
and let $A\phi=b$ be system of equations that describes the set $\mathcal{L}$.
Now, we consider a new transformation of \ref{pro:comb-ofp2} using matrix variables $\Phi$. \rev{This transformation requires, in addition, to include:} 1)  the linear constraints, $\mathcal{L}$, that come from \ref{pro:comb-ofp2}, namely $A\phi=b$, 2) the squares of those constraints in the matrix variable $\Phi$, namely $diag(A\Phi A^T)=b\circ b$, where $\circ$ is the Hadamard product of vectors, 3) the quadratic constraints of \ref{pro:comb-ofp2} written in the matrix variables $\Phi$:
\begin{align}
W_k-\sum_{\ell=1}^n\sum_{j=1}^n c_{j\ell} v_{jkj\ell}&=0,\; \forall k=1,\ldots,n, \label{quadratic1}\\
\sum_{j,k=1}^n (P_{jk}-q_{jkjk})&=0, \label{quadratic2}\\
\rev{\sum_{j,\ell=1}^n (X_{j\ell}-u_{j\ell j\ell})}&=0 \label{quadratic3}
\end{align}
and 4) \rev{the matrix
 $$\overline \Phi=\left( \begin{array}{cc} 1 & \phi^T \\ \phi & \Phi \end{array}\right) \in \mathcal{C}^*,$$ the cone of completely positive matrices of the appropriate dimension (see \citep[Theorem 3.1]{Burer09}).} Schematically, we can write that formulation as follows:

\begin{align}
 \min  &  \; \displaystyle \sum_{j=1}^n\sum_{k=1}^n\sum_{j'=1}^n\sum_{k'=1}^n  d_{jkj'k'}q_{jkj'k'} +1/2\sum_{j=1}^n\sum_{\ell=1}^n\sum_{p=1}^n\sum_{q=1}^n  h_{j\ell pq}u_{j\ell pq}\nonumber \\
 & +1/2\sum_{k=1}^n \lambda_k \sum_{j=1}^n\sum_{\ell=1}^n c_{j\ell}v_{jkj\ell} \label{DOMP-QUAD} \tag{CP-DOMP-0} \\
\mbox{s.t. } & \phi \in \mathcal{L}  \nonumber\\
& diag(A^T\Phi A)= b\circ b,  \mbox{ where $A$ is the matrix of } \mathcal{L} \nonumber\\
%& \sout{P_{jk}=q_{jkjk},\;  X_{j\ell}=u_{j\ell j\ell},\;
%\forall j,k,\ell=1,\ldots,n} \nonumber\rev{\mbox{ I claim not needed}} \\
& W_k-\sum_{\ell=1}^n\sum_{j=1}^n c_{j\ell} v_{jkj\ell}=0,\; \forall k=1,\ldots,n,\nonumber\\
& \sum_{j,k=1}^n (P_{jk}-q_{jkjk})=0, \nonumber \\
& \rev{\sum_{j,\ell=1}^n (X_{j\ell}-u_{j\ell j\ell})=0, \nonumber} \\
&\overline \Phi = \left(\begin{array}{cc} 1 & \phi^T \\ \phi & \Phi \end{array}\right)\in \mathcal{C}^* \nonumber
\end{align}

It is well-known that \ref{DOMP-QUAD} is always a relaxation of \ref{pro:comb-ofp2} and therefore it is also a relaxation for \DOMP. Our next result proves that actually it is  not a relaxation of \DOMP but a reformulation.
\begin{thm} \label{th:3.1}
\DOMP belongs to the class of continuous, convex, conic optimization problems. Problem \ref{DOMP-QUAD} is equivalent to \ref{pro:comb-ofp2}, i.e.: (i) \rev{they have equal optimal} objective value, (ii) if $(\phi^*,\Phi^*)$ is an optimal solution for Problem \ref{DOMP-QUAD} then $\phi^* $ is in the convex hull of optimal solutions of Problem \ref{pro:comb-ofp2}.
\end{thm}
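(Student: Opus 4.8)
The plan is to adapt the completely positive reformulation scheme of \cite{Burer09} to the extended formulation \ref{pro:comb-ofp2}; the single feature that places the problem outside that scheme is the quadratic sorting requirement, and it is neutralized by the auxiliary variables $W$, the valid inequalities \eqref{pro:comb1p8} and the redundant identity \eqref{pro:comb1p7}. Since \ref{DOMP-QUAD} is already known to be a relaxation of \ref{pro:comb-ofp2}, I only need the reverse inequality between optimal values together with statement (ii). First I would note that the feasible region of \ref{DOMP-QUAD} is nonempty and compact: constraints \eqref{pro:comb1p}, \eqref{pro:comb2p}, \eqref{pro:comb5p} and $\phi\ge 0$ force all entries of $P$ and $X$ into $[0,1]$, which bounds every block of $\phi$, and then $\overline\Phi\in\mathcal C^*$ with unit $(1,1)$-entry bounds every entry of $\Phi$, so an optimal pair $(\phi^*,\Phi^*)$ exists. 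Since $\overline\Phi^*\in\mathcal C^*$, I would write $\overline\Phi^*=\sum_{i=1}^m\binom{t_i}{y_i}\binom{t_i}{y_i}^T$ with $t_i\ge 0$ and $y_i\ge 0$; the $(1,1)$-entry gives $\sum_i t_i^2=1$, so $I_+:=\{i:t_i>0\}$ is nonempty, and setting $\lambda_i:=t_i^2$ and $\phi_i:=y_i/t_i$ for $i\in I_+$ yields $\sum_{i\in I_+}\lambda_i=1$, $\phi^*=\sum_{i\in I_+}\lambda_i\phi_i$, and $\Phi^*=\sum_{i\in I_+}\lambda_i\phi_i\phi_i^T+\sum_{i\notin I_+}y_iy_i^T$.

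The core of the proof is to show that every $\phi_i$, $i\in I_+$, is feasible for \ref{pro:comb-ofp2}. For each linear equation $a^T\phi=\beta$ in the system $A\phi=b$ defining $\mathcal L$, the squared-constraint condition $\mathrm{diag}(A\Phi^* A^T)=b\circ b$ gives $\sum_{i\in I_+}\lambda_i(a^T\phi_i)^2+\sum_{i\notin I_+}(a^Ty_i)^2=\beta^2$ while $\sum_{i\in I_+}\lambda_i(a^T\phi_i)=\beta$; by convexity of $t\mapsto t^2$, $\beta^2=(\sum_{i\in I_+}\lambda_i a^T\phi_i)^2\le\sum_{i\in I_+}\lambda_i(a^T\phi_i)^2$, so both sums collapse, forcing $a^T\phi_i=\beta$ for all $i\in I_+$ and $a^Ty_i=0$ for all $i\notin I_+$. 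Hence each $\phi_i$ satisfies all linear constraints of \ref{pro:comb-ofp2} — in particular \eqref{pro:comb1p}, \eqref{pro:comb2p}, \eqref{pro:comb5p}, which with $\phi_i\ge 0$ put the entries of $P^{(i)}$ and $X^{(i)}$ in $[0,1]$, as well as \eqref{pro:comb1p8}, \eqref{pro:comb1p3}, \eqref{pro:comb1p7} — while each $y_i$, $i\notin I_+$, satisfies $Ay_i=0$. Expanding the matrix form \eqref{quadratic2} of the quadratic constraint \eqref{pro:comb2p2} through the decomposition of $\Phi^*$, its left-hand side becomes $\sum_{i\in I_+}\lambda_i\sum_{j,k}P^{(i)}_{jk}(1-P^{(i)}_{jk})$ minus the sum of squares of the $P$-block entries of the $y_i$ with $i\notin I_+$; the first part is nonnegative because $P^{(i)}_{jk}\in[0,1]$, the second is nonnegative, and they are equal, so both vanish. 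Thus $P^{(i)}\in\{0,1\}^{n\times n}$ for all $i\in I_+$ and the $P$-block of every $y_i$ ($i\notin I_+$) is zero; the analogous use of \eqref{quadratic3}/\eqref{pro:combq2p2} gives $X^{(i)}\in\{0,1\}^{n\times n}$ and zero $X$-blocks for those $y_i$. With \eqref{pro:comb1p}, \eqref{pro:comb2p}, \eqref{pro:comb5p}, this makes each $P^{(i)}$ a permutation matrix and each row of $X^{(i)}$ a $0/1$ unit vector.

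The remaining and decisive step is to recover the quadratic link \eqref{pro:comb1p2}, i.e.\ $W^{(i)}_k=\sum_j P^{(i)}_{jk}\sum_\ell c_{j\ell}X^{(i)}_{j\ell}$, at each $\phi_i$. Because the $P$- and $X$-blocks of the $y_i$ ($i\notin I_+$) vanish, the matrix identity \eqref{quadratic1} simplifies to $\sum_{i\in I_+}\lambda_i\bigl(W^{(i)}_k-\sum_{j,\ell}c_{j\ell}P^{(i)}_{jk}X^{(i)}_{j\ell}\bigr)=0$ for every $k$. Fixing $i\in I_+$ and $k$, the permutation structure of $P^{(i)}$ provides a unique $j(k)$ with $P^{(i)}_{j(k),k}=1$, so $\sum_{j,\ell}c_{j\ell}P^{(i)}_{jk}X^{(i)}_{j\ell}=\sum_\ell c_{j(k),\ell}X^{(i)}_{j(k),\ell}$, and the linear constraint \eqref{pro:comb1p8} at $(j,k)=(j(k),k)$ reads $W^{(i)}_k=\sum_\ell c_{j(k),\ell}X^{(i)}_{j(k),\ell}+\eta^{(i)}_{j(k),k}$ with $\eta^{(i)}_{j(k),k}\ge 0$; hence each summand $W^{(i)}_k-\sum_{j,\ell}c_{j\ell}P^{(i)}_{jk}X^{(i)}_{j\ell}=\eta^{(i)}_{j(k),k}$ is nonnegative, and a weighted sum of nonnegative terms with positive weights vanishing forces each to vanish, which is exactly \eqref{pro:comb1p2} at $\phi_i$. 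Since $P^{(i)},X^{(i)}$ are $0/1$, constraints \eqref{pro:comb2p2} and \eqref{pro:combq2p2} hold at $\phi_i$ trivially, and the rest of \ref{pro:comb-ofp2} is linear and already verified; so every $\phi_i$, $i\in I_+$, is feasible for \ref{pro:comb-ofp2}.

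Finally, I would close the bookkeeping. Because the $P$- and $X$-blocks of the $y_i$ ($i\notin I_+$) are zero, $q_{jkj'k'}=\sum_{i\in I_+}\lambda_iP^{(i)}_{jk}P^{(i)}_{j'k'}$, $u_{j\ell pq}=\sum_{i\in I_+}\lambda_iX^{(i)}_{j\ell}X^{(i)}_{pq}$ and $v_{jkj\ell}=\sum_{i\in I_+}\lambda_iP^{(i)}_{jk}X^{(i)}_{j\ell}$, so the (linear) objective of \ref{DOMP-QUAD} at $(\phi^*,\Phi^*)$, which by \eqref{eq:ofp2} is just $\tfrac12\langle G,\Phi\rangle$ written in the entries of $\Phi$, equals $\sum_{i\in I_+}\lambda_i\,v(\phi_i)$, where $v(\cdot)$ denotes the objective of \ref{pro:comb-ofp2} (using \eqref{pro:comb1p2} to convert $\sum_k\lambda_k W^{(i)}_k$ back to the allocation-cost form). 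Since every $\phi_i$ is feasible, $v(\phi_i)\ge z^*:=\mathrm{opt}(\ref{pro:comb-ofp2})$, whence $\mathrm{opt}(\ref{DOMP-QUAD})=\sum_{i\in I_+}\lambda_i v(\phi_i)\ge z^*$; combined with the relaxation inequality this proves (i). Then $\sum_{i\in I_+}\lambda_i v(\phi_i)=z^*=\sum_{i\in I_+}\lambda_i z^*$ with $v(\phi_i)\ge z^*$ and $\lambda_i>0$ forces $v(\phi_i)=z^*$ for every $i\in I_+$, so each $\phi_i$ is optimal for \ref{pro:comb-ofp2} and $\phi^*$ is a convex combination of optimal solutions, which is (ii). Since \DOMP is modeled by \ref{pro:comb-ofp} and the preceding Lemma reformulates it as \ref{pro:comb-ofp2}, \DOMP is thereby exhibited as a continuous, convex, conic optimization problem. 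The main obstacle, as anticipated in the discussion preceding the theorem, is exactly the per-summand recovery of \eqref{pro:comb1p2} in the third paragraph: this is where the quadratic sorting constraint \eqref{pro:comb3p} would otherwise break Burer's argument, and it succeeds only because $W$, \eqref{pro:comb1p8} and the permutation structure of $P^{(i)}$ together render the relevant residuals nonnegative.
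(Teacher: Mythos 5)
Your overall route is the paper's: decompose the completely positive matrix $\overline\Phi^*$ into rank-one atoms, use $A\phi=b$ together with $diag(A\Phi A^T)=b\circ b$ to force each atom with positive weight to satisfy all linear constraints (you re-derive this via the Jensen/equality argument instead of citing Burer's representation result, which is fine and self-contained), then use \eqref{quadratic2}--\eqref{quadratic3} to force binariness of $P^{(i)},X^{(i)}$ per atom, recover \eqref{pro:comb1p2} per atom from \eqref{quadratic1} and \eqref{pro:comb1p8}, and finish with the convex-combination bookkeeping. The one step that fails as written is your treatment of \eqref{quadratic2} (and likewise \eqref{quadratic3}): expanding through the decomposition you correctly get $\sum_{i\in I_+}\lambda_i\sum_{j,k}P^{(i)}_{jk}(1-P^{(i)}_{jk})$ \emph{minus} $\sum_{i\notin I_+}\sum_{j,k}\bigl((y_i)^P_{jk}\bigr)^2$ equal to zero, but ``they are equal, so both vanish'' is a non sequitur: two nonnegative quantities that are equal need not be zero. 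Since the vanishing of the $P$- and $X$-blocks of the $y_i$, $i\notin I_+$, is also what you invoke later to simplify \eqref{quadratic1} and to identify the objective value with $\sum_{i\in I_+}\lambda_i v(\phi_i)$, this gap propagates through the rest of the argument.

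The missing ingredient is exactly what the paper supplies and you only gesture at: the extra rays must be killed. The paper does it by proving $\mathcal{L}$ is bounded (bounding $W$ via \eqref{pro:comb1p7}, $\eta$ via \eqref{pro:comb1p8}, the slack of \eqref{pro:comb1p3}, and $\zeta$ via \eqref{pro:comb6pmodeq}), so the recession cone is trivial and every $y_i$ with $i\notin I_+$, which you already showed satisfies $Ay_i=0$, $y_i\ge0$, is in fact the zero vector; then both of your sums vanish separately and binariness follows. A cheaper repair inside your own derivation: the homogenized rows of \eqref{pro:comb1p} and \eqref{pro:comb5p} have nonnegative coefficients and zero right-hand side, so $a^Ty_i=0$ with $y_i\ge0$ already forces the $P$- and $X$-blocks of every $y_i$, $i\notin I_+$, to be zero, which is all your subsequent steps actually use. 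Relatedly, your opening assertion that \eqref{pro:comb1p}, \eqref{pro:comb2p}, \eqref{pro:comb5p} ``bound every block of $\phi$'' is unjustified for $W,\xi,\eta,\zeta$ (it needs the constraints just listed), and the attainment/compactness claim is not needed for the statement anyway. With either fix in place your proof is correct and essentially coincides with the paper's; your recovery of \eqref{pro:comb1p2} through the unique $j(k)$ with $P^{(i)}_{j(k)k}=1$ is a clean variant of the paper's step of multiplying \eqref{pro:comb1p8} by $P^r_{jk}$ and summing over $j$.
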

\begin{proof} \rm

Clearly, the objective function of the problem \ref{DOMP-QUAD} can be written as a linear form of $\Phi$ (see (\ref{eq:ofp2})):
$$ 1/2\langle D,Q \rangle+1/2\langle H,U \rangle+ \langle F, V \rangle= 1/2 \langle G, \Phi \rangle.$$

Let $\mathcal{L}$ be the linear constraints of the feasible region of \ref{pro:comb-ofp2}, as defined in (\ref{eq:L}),
and let $\mathcal{L}_{\infty}$ be its recession cone.

First of all, one realizes that $P$, $\y$ and $X$ are bounded \rev{above since they satisfy \eqref{pro:comb1p}, \eqref{pro:comb4p}, \eqref{pro:comb5p}, respectively. Besides $W_k$, for all $k=1,\ldots,n$ are also bounded above since if we apply constraint (\ref{pro:comb1p7}), we get $\sum_{k=1}^n W_k=\sum_{j=1}^n\sum_{\ell=1}^n c_{j\ell} X_{j\ell}$. Hence, for each $k=1,\ldots,n$, $W_k\le n\max_{j,\ell} c_{j\ell}$. This proves that all variables $P,X,\y,W$ are nonnegative and bounded above. To prove that also the slack variables $\eta,\psi,\zeta$ are bounded we proceed as follows. We observe from \eqref{pro:comb1p8} that $\eta_{jk}\le W_k+2\sum_{j,\ell}c_{j\ell}$, for all $j,k=1,\ldots,n$ and thus variables $\eta$ are bounded. Analogously, from \eqref{pro:comb1p3}, we get that $\psi_k\le W_k+W_{k+1}$, for all $k=1,\ldots,n-1$, hence $\psi$ variables are bounded as well. Finally, using \eqref{pro:comb6pmodeq}, it follows that $\zeta_\ell \le (n-p+1)\y_\ell +\sum_{j=1}^n\sum_{\ell=1}^n X_{j\ell}$, for all $\ell=1,\ldots,n$, and thus using the boundedness of $\y$ and $X$ the slack variables $\zeta$ are also bounded.}

Therefore, it follows that the recession cone of the linear part of the feasible region of \ref{pro:comb-ofp2} is the zero vector.

Next, we consider the sets:
\begin{align*}
\mathcal{L}'=& \mathcal{L}\cap \{\phi: W_k-\sum_{\ell=1}^n\sum_{j=1}^n c_{j\ell} P_{jk}X_{j\ell}=0,\; k=1,\ldots,n, \, \sum_{j,k=1}^n (P_{jk}-P_{jk}^2)=0, \, \rev{\sum_{j,\ell=1}^n (X_{j\ell}-X_{j\ell}^2)=0} \}\\
(\mathcal{L}')^1=& \left\{ {1\choose \phi} {1\choose \phi}^T: \phi\in \mathcal{L}'\right\},\\
\mathcal{R}=& \left\{ \overline{\Phi}=\left(\begin{array}{cc} 1 & \phi^T \\ \phi & \Phi \end{array}\right)\in \mathcal{C}^*: A\phi=b,\;\;  diag(A\Phi A^T)=b\circ b \right\},\\
\mathcal{R}'=& \mathcal{R}\cap \left\{ \overline{\Phi}=\left(\begin{array}{cc} 1 & \phi^T \\ \phi & \Phi \end{array}\right):W_k-\sum_{\ell=1}^n\sum_{j=1}^n v_{jk\ell j}=0,\forall k=1,\ldots,n,\, \sum_{j,k=1}^n (P_{jk}-q_{jkjk})=0, \right. \\
           &  \rev{\sum_{j,\ell=1}^n (X_{j\ell}-u_{j\ell j\ell})=0 \Big\}.}
\end{align*}
Recall the $A\phi=b$ is the set of linear constraints that describe the set $\mathcal{L}$, $\mathcal{L}'$ is the  continuous relaxation of \ref{pro:comb-ofp2} and $\mathcal{R}'$ is the feasible region of \ref{DOMP-QUAD}.

We prove, that
$$  conv((\mathcal{L}')^1) = \mathcal{R}'.$$
%Observe that since $\mathcal{L}$ is bounded (the recession cone of the linear part of the feasible region is the zero vector) the convex hull of $(\mathcal{L}')^1$ is closed.

The inclusion  $ conv((\mathcal{L}')^1)\subseteq \mathcal{R}'$ is clear. For the reverse inclusion, since \rev{$\mathcal{R}'\subset \mathcal{R}\subset \mathcal{C}^*$  then for any matrix $\overline \Phi\in \mathcal{R}'$,  it is known (see e.g. \cite{Burer09}) that there exists a representation as:
$$\overline{\Phi}=\left(\begin{array}{cc} 1 & \phi^T \\ \phi & \Phi \end{array}\right)=\sum_{r\in I} \mu^r {1\choose \phi^r} {1\choose \phi^r}^T + \sum_{r \in J} {0\choose \gamma^r} {0\choose \gamma^r}^T, 
$$
where  $I$ and $J$ are  finite sets of indices, $\mu^r\ge 0$ for all $r\in I$, $\sum_{r\in I} \mu^r=1$, $\phi^r\in \mathcal{L}$ for all $r\in I$ and $\gamma^r\in \mathcal{L}_\infty$ for all $r\in J$. Therefore, since the recession cone $\mathcal{L}_\infty$ is the zero vector, the representation above reduces to:
\begin{equation} \label{eq:represent}
\overline{\Phi}=\left(\begin{array}{cc} 1 & \phi^T \\ \phi & \Phi \end{array}\right)=\sum_{r\in I} \mu^r {1\choose \phi^r} {1\choose \phi^r}^T, 
\end{equation}
with $\mu^r\ge 0$ for all $r\in I$, $\sum_{r\in I} \mu^r=1$, and $\phi^r\in \mathcal{L}$ for all $r\in I$.}
\medskip

We claim that $\sum_{j,k=1}^n (P_{jk}-q_{jkjk})=0$, \rev{$\sum_{j,\ell=1}^n (X_{j\ell}-u_{j\ell j\ell})=0$} and $W_k-\sum_{\ell=1}^n\sum_{j=1}^n c_{j\ell} v_{jk\ell j}=0,\forall k=1,\ldots,n$ implies that
each $\phi^r$ with $r\in I$ satisfies $\sum_{j,k=1}^n (P_{jk}^r-(P_{jk}^r)^2)=0$, \rev{$\sum_{j,\ell=1}^n (X_{j\ell}^r-(X_{j\ell}^r)^2)=0$} and $W_k^r -\sum_{\ell=1}^n\sum_{j=1}^n c_{j\ell} P_{jk}^rX_{j\ell}^r=0,\; \forall \; k=1,\ldots,n$. This will complete the proof.

First of all, we observe that, by the representation (\ref{eq:represent}), it holds that: \rev{$P_{jk}=\sum_{r\in I} \mu^r P_{jk}^r,\; X_{j\ell}=\sum_{r\in I} \mu^r X_{j\ell}^r$,} $W_k=\sum_{r\in I} \mu^r W_k^r$,\;  $q_{jkjk}=\sum_{r\in I} \mu^r (P_{jk}^r)^2$ and $v_{jk\ell j}=\sum_{r\in I} \mu^r P_{jk}^r X_{\ell j}^r$; for all $j,k=1,\ldots,n$.

We begin with $\sum_{j,k=1}^n (P_{jk}-q_{jkjk})=0$ and replace the variables $P_{jk}$ and $q_{jkjk}$ by their representation in terms of (\ref{eq:represent}) which results in:
\begin{equation} \label{eq:pjkr} 
 0=\sum_{j=1}^n\sum_{k=1}^n \left(\sum_{r\in I} \mu^r P_{jk}^r-\sum_{r\in I} \mu^r (P_{jk}^r)^2\right)=\sum_{j=1}^n\sum_{k=1}^n \left(\sum_{r\in I} \mu^r (P_{jk}^r-(P_{jk}^r)^2)\right).
\end{equation}
\rev{It is clear that $P_{jk}^r\le 1$ and $X_{j\ell}^r\le 1$ for all $j,k,\ell=1,\ldots,n$. Indeed,  $P_{jk}=\sum_{r\in I} \mu^r P_{jk}^r$ with $P_{jk}^r\ge 0$ for all $r\in I$ and $P_{jk}\le 1$ since it satisfies \eqref{pro:comb1p} and \eqref{pro:comb2p}. Therefore $P_{jk}^r\le 1$, for all $j,k=1,\ldots,n$, then all the addends in (\ref{eq:pjkr}) are nonnegative and this implies that
\begin{equation}\label{eq:cond1}
P_{jk}^r-(P_{jk}^r)^2=0, \qquad \forall \; j,k=1,\ldots,n, \; r\in I.
\end{equation}
Next,  the proof for $\sum_{j,\ell=1}^n (X_{j\ell}^r-(X_{j\ell}^r)^2)=0$ is similar, but to prove that $X_{j\ell}^r\le 1$ we use the inequality \eqref{pro:comb5p} valid for $X_{j\ell}$ instead of \eqref{pro:comb1p} and \eqref{pro:comb2p}. This proves the first part of the claim.}

Next, we consider $W_k-\sum_{\ell=1}^n\sum_{j=1}^n c_{j\ell} v_{jk\ell j}=0$. Then, we replace $W_k$ and $v_{jk\ell j}$ by their representation in terms of (\ref{eq:represent}) which results in
\begin{equation}\label{eq:segcon}
0=\sum_{r\in I} \mu^r W_k^r-\sum_{\ell=1}^n\sum_{j=1}^n \sum_{r\in I} \mu^r c_{j\ell} P_{jk}^r X_{jk}^r=\sum_{r\in I}\mu^r \left( W_k^r- \sum_{\ell =1}^n \sum_{j=1}^n c_{j\ell} P_{jk}^rX_{\ell j}^r\right).
\end{equation}

Now, since $W_k^r$, $X_{\ell j}^r$ and $P_{jk}^r$ satisfy \eqref{pro:comb1p8} for all $r\in I$ and  $P_{jk}^r\ge 0$ for all $j,k$ and $r$, we multiply both sides of inequality \eqref{pro:comb1p8} \rev{by} $P_{jk}^r$. This results in:
$$ W_k^r P_{jk}^r\ge \sum_{\ell =1}^n c_{j\ell} X_{j\ell}^r P_{jk}^r-\sum_{\ell=1}^n c_{j\ell}P_{jk}^r (1-P_{jk}^r), \quad \forall j,k=1,\ldots,n,\; r\in I.$$
We sum the above inequalities for all $j=1,\ldots,n$ to obtain:
$$ W_k^r \displaystyle \stackrel{= 1\; (\ref{pro:comb1p}),(\ref{eq:represent})}{\overbrace{\displaystyle \sum_{j=1}^n P_{jk}^r}}\ge \sum_{j=1}^n \sum_{\ell =1}^n c_{j\ell} X_{j\ell}^r P_{jk}^r- \sum_{j=1}^n \sum_{\ell=1}^n c_{j\ell}\stackrel{=0 \, (\ref{eq:cond1})}{\overbrace{(P_{jk}^r-(P_{jk}^r)^2)}}, \quad \forall k=1,\ldots,n,\; r\in I.$$

Furthermore, in the above inequality the factor multiplying $W_k^r$ in the left-hand-side is  equal to one by (\ref{pro:comb1p}) and (\ref{eq:represent}), and the second term is the right-hand-side in zero by (\ref{eq:cond1}). Hence, we obtain:
\begin{equation}\label{eq:cond2}
W_k^r \ge \sum_{j=1}^n \sum_{\ell =1}^n c_{j\ell} X_{j\ell}^r P_{jk}^r, \quad \forall k=1,\ldots,n,\; r\in I.
\end{equation}
This condition combined with (\ref{eq:segcon}) proves the second part of the claim. Therefore, $ \mathcal{R}' \subseteq conv((\mathcal{L}')^1)$ also holds, and thus, $ \mathcal{R}'= conv((\mathcal{L}')^1)$.

The above property allows us to rewrite Problem \ref{pro:comb-ofp2} as an equivalent continuous, convex conic linear \rev{optimization} problem in the matrix variables ${1\quad \phi^T \choose \phi \quad \Phi}$ as described in \ref{DOMP-QUAD}.  \hfill $\Box$
\end{proof}

The next result states that in \ref{DOMP-QUAD} we can remove the explicit dependency on the original variables $\phi$ obtaining a new formulation that only depends on the essential matrix variables $\Phi$.

\rev{
\begin{thm} \label{th:3.2}
Problem \ref{DOMP-QUAD} can be reformulated removing the explicit dependency on the $\phi$ variables as:
\begin{align}
 \min  &  \; \displaystyle \sum_{j=1}^n\sum_{k=1}^n\sum_{j'=1}^n\sum_{k'=1}^n  d_{jkj'k'}q_{jkj'k'} +1/2\sum_{j=1}^n\sum_{\ell=1}^n\sum_{p=1}^n\sum_{q=1}^n  h_{j\ell pq}u_{j\ell pq}\nonumber \\
 & +1/2\sum_{k=1}^n \lambda_k \sum_{j=1}^n\sum_{\ell=1}^n c_{j\ell}v_{jkj\ell} \label{pro:comb-of} \tag{CP-DOMP} \\
\mbox{s.t. } & \Phi[a_{(4)}]_1 \in \mathcal{L}  \nonumber\\
& diag(A^T\Phi A)= b\circ b,  \mbox{ where $A$ is the matrix of } \mathcal{L} \nonumber\\
& \sum_{\ell = 1}^n\rho_{1\ell k} - \sum_{\ell = 1}^n\sum_{j=1}^n c_{j\ell} v_{jkj\ell}=0,\; \forall k=1,\ldots,n,\nonumber\\
%& W_k-\sum_{\ell=1}^n\sum_{j=1}^n c_{j\ell} v_{jkj\ell}=0,\; \forall k=1,\ldots,n,\nonumber\\
& \sum_{j,k=1}^n (\sum_{\ell = 1}^n q_{jk1\ell} -q_{jkjk})=0, \nonumber \\
%& \sum_{j,k=1}^n (P_{jk}-q_{jkjk})=0, \nonumber \\
& \sum_{j,\ell=1}^n (\sum_{r=1}^n v_{j\ell 1r} - u_{j\ell j\ell})=0, \nonumber \\
%& \sum_{j,\ell=1}^n (X_{j\ell}-u_{j\ell j\ell})=0, \nonumber \\
& \Phi \in \mathcal{C}^* \nonumber
%&\hat \Phi = \left(\begin{array}{cc} 1 & \phi^T \\ \phi & \Phi \end{array}\right)\in \mathcal{C}^* \nonumber
\end{align}

Moreover, if $\hat \Phi$  is an optimal solution for Problem \ref{pro:comb-of} then $\hat \Phi [a_{(\ref{pro:comb1p})}]_1$ is in the convex hull of optimal solutions of Problem \ref{pro:comb-ofp2}.
\end{thm}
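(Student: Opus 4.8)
The plan is to regard the linear map $\Phi\longmapsto\Phi\,[a_{(\ref{pro:comb1p})}]_1$ as the device that recovers the vector variable $\phi$ from the matrix variable $\Phi$, and then to check that $\Phi\longmapsto\bigl(\Phi\,[a_{(\ref{pro:comb1p})}]_1,\Phi\bigr)$ is a correspondence between \ref{pro:comb-of} and \ref{DOMP-QUAD} that preserves both feasibility and objective value. Two features of the coefficient vector $[a_{(\ref{pro:comb1p})}]_1$ of the equation $\sum_{j=1}^{n}P_{j1}=1$ make it the right choice: it is a $0/1$ vector, hence it pairs nonnegatively with any nonnegative vector, and its right-hand side in the system $A\phi=b$ describing $\mathcal{L}$ equals $1$, so that $[a_{(\ref{pro:comb1p})}]_1^{T}\phi=1$ holds on all of $\mathcal{L}$. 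I would also use at the outset two elementary properties of $\mathcal{C}^*$: it is closed under passing to principal submatrices, and a matrix $\Phi\in\mathcal{C}^*$ satisfying $[a_{(\ref{pro:comb1p})}]_1^{T}\Phi\,[a_{(\ref{pro:comb1p})}]_1=1$ can be bordered to a matrix $\overline{\Phi}=\left(\begin{array}{cc}1 & \phi^{T}\\ \phi & \Phi\end{array}\right)\in\mathcal{C}^*$ once one sets $\phi:=\Phi\,[a_{(\ref{pro:comb1p})}]_1$.

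For the implication \ref{pro:comb-of} $\Rightarrow$ \ref{DOMP-QUAD}, take $\Phi$ feasible for \ref{pro:comb-of}, put $\phi:=\Phi\,[a_{(\ref{pro:comb1p})}]_1$, and write $\Phi=\sum_{r}\psi^{r}(\psi^{r})^{T}$ with all $\psi^{r}\ge 0$. Let $c_{r}:=[a_{(\ref{pro:comb1p})}]_1^{T}\psi^{r}\ge 0$. The equation of $diag(A\Phi A^{T})=b\circ b$ associated with $[a_{(\ref{pro:comb1p})}]_1$ gives $\sum_{r}c_{r}^{2}=[a_{(\ref{pro:comb1p})}]_1^{T}\Phi\,[a_{(\ref{pro:comb1p})}]_1=1$, while $\sum_{r}c_{r}\psi^{r}=\Phi\,[a_{(\ref{pro:comb1p})}]_1=\phi$, so that $\overline{\Phi}=\sum_{r}{c_{r}\choose \psi^{r}}{c_{r}\choose \psi^{r}}^{T}\in\mathcal{C}^*$. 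Every constraint of \ref{DOMP-QUAD} that names components of $\phi$ — the inclusion $\phi\in\mathcal{L}$ and the three quadratic equalities containing $W_{k}$, $P_{jk}$, $X_{j\ell}$ — is then rewritten as a constraint in $\Phi$ alone by reading $W_{k}$, $P_{jk}$, $X_{j\ell}$ off the relevant blocks of $\Phi$ exactly as displayed in \ref{pro:comb-of} (for instance $W_{k}=(\Phi\,[a_{(\ref{pro:comb1p})}]_1)_{W_{k}}$ equals the corresponding row-sum of the $(PW)$ block, because of the pertinent permutation equation). Since the objective of both problems is the same linear form in the entries $q,u,v$ of $\Phi$, the value is preserved.

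For the converse \ref{DOMP-QUAD} $\Rightarrow$ \ref{pro:comb-of}, take $(\phi,\Phi)$ feasible for \ref{DOMP-QUAD}; then $\Phi$ is the principal submatrix of $\overline{\Phi}\in\mathcal{C}^*$ obtained by deleting its first row and column, hence $\Phi\in\mathcal{C}^*$, and it only remains to verify that $\Phi\,[a_{(\ref{pro:comb1p})}]_1=\phi$, because then each constraint of \ref{pro:comb-of} is literally a constraint of \ref{DOMP-QUAD} with $\phi$ replaced through this identity. Writing $\overline{\Phi}=\sum_{r}{c_{r}\choose \psi^{r}}{c_{r}\choose \psi^{r}}^{T}$ with $c_{r},\psi^{r}\ge 0$ and setting $d_{r}:=[a_{(\ref{pro:comb1p})}]_1^{T}\psi^{r}\ge 0$, we obtain $\sum_{r}c_{r}^{2}=\overline{\Phi}_{00}=1$ from the top-left entry, $\sum_{r}d_{r}^{2}=[a_{(\ref{pro:comb1p})}]_1^{T}\Phi\,[a_{(\ref{pro:comb1p})}]_1=1$ from the square constraint, and $\sum_{r}c_{r}d_{r}=[a_{(\ref{pro:comb1p})}]_1^{T}\phi=1$ from $\phi\in\mathcal{L}$. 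The equality case of the Cauchy--Schwarz inequality, together with $c,d\ge 0$ which removes the sign ambiguity, forces $d_{r}=c_{r}$ for every $r$, whence $\Phi\,[a_{(\ref{pro:comb1p})}]_1=\sum_{r}d_{r}\psi^{r}=\sum_{r}c_{r}\psi^{r}=\phi$. This is the crux of the proof and the only step that is not bookkeeping: it is precisely what guarantees that removing the border and the explicit $\phi$-block does not enlarge the feasible set.

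Combining the two implications, \ref{pro:comb-of} and \ref{DOMP-QUAD} have the same optimal value, and $\Phi\mapsto(\Phi\,[a_{(\ref{pro:comb1p})}]_1,\Phi)$ carries optimal solutions of \ref{pro:comb-of} to optimal solutions of \ref{DOMP-QUAD}. In particular, if $\hat{\Phi}$ is optimal for \ref{pro:comb-of} then $(\hat{\Phi}\,[a_{(\ref{pro:comb1p})}]_1,\hat{\Phi})$ is optimal for \ref{DOMP-QUAD}, and Theorem \ref{th:3.1}(ii) then yields that $\hat{\Phi}\,[a_{(\ref{pro:comb1p})}]_1$ belongs to the convex hull of optimal solutions of \ref{pro:comb-ofp2}. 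Apart from the Cauchy--Schwarz step, the remaining work is the routine verification that the translated quadratic equalities coincide, entry by entry, with the constraints listed in \ref{pro:comb-of} under the block structure \eqref{eq:Phi}, together with the two closure properties of $\mathcal{C}^*$ recorded at the beginning.
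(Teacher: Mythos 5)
Your proposal is correct, and its overall architecture coincides with the paper's: you use the same vector $\alpha=[a_{(\ref{pro:comb1p})}]_1\ge 0$ with $\alpha^T\phi=1$ on $\mathcal{L}$, recover $\phi$ as $\Phi\alpha$, invoke the two closure properties of $\mathcal{C}^*$ (principal submatrices, and bordering by the nonnegative matrix $(\alpha\;I)$ using $\alpha^T\Phi\alpha=1$ from the squared constraint), substitute $\phi=\Phi\alpha$ blockwise to obtain \ref{pro:comb-of}, and finish with Theorem \ref{th:3.1}(ii). Where you differ is in the justification of the crux identity $\Phi\alpha=\phi$ on the feasible set of \ref{DOMP-QUAD}: the paper's write-up simply asserts it ``since $\phi\phi^T=\Phi$'', which is literally valid only for rank-one matrices and implicitly leans on the representation \eqref{eq:represent} established in Theorem \ref{th:3.1} (from which $\Phi\alpha=\sum_r\mu^r\phi^r(\alpha^T\phi^r)=\phi$ follows at once, since $\alpha^T\phi^r=1$ for $\phi^r\in\mathcal{L}$), whereas you derive it directly from an arbitrary completely positive decomposition of $\overline{\Phi}$ via the equality case of Cauchy--Schwarz applied to $\sum_r c_r^2=\sum_r d_r^2=\sum_r c_r d_r=1$. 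Your argument is self-contained (it only uses $\phi\in\mathcal{L}$, the single squared constraint for that row, and $\overline{\Phi}\in\mathcal{C}^*$, essentially reproducing Burer's lemma), and it repairs the one step the paper leaves loose; the paper's route is shorter because it recycles the decomposition already obtained in Theorem \ref{th:3.1}. Both are sound, and your forward direction (bordering a feasible $\Phi$ of \ref{pro:comb-of} into a feasible pair $(\Phi\alpha,\Phi)$ of \ref{DOMP-QUAD}) matches the paper's use of $\overline{\Phi}=(\alpha\;I)^T\Phi(\alpha\;I)$.
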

}
\begin{proof}
{\rm
\rev{The proof is based on finding an appropriate linear combination of the rows of the linear constraints in $\mathcal{L}$, \rev{namely $A\phi=b$,} that allows one a writing of $\phi$ as a linear combination of $\Phi$, following an argument similar to the one in \cite{Burer09}. We reproduce it for the sake of completeness.

Take  the first equation in (\ref{pro:comb1p}) i.e., $\sum_{j=1}^n P_{j1}=1$. Recall that we rewrite this constraints as $[a_{(\ref{pro:comb1p})}]^T_1 \phi=1$.  Next, define the vector $\beta=(\beta_i)_{i\in I}$, as $\beta_1=1$, $\beta_i=0$, for all $i>1$.

It is clear that $\sum_{i\in I} \beta_i [a_{(i)}]=[a_{(\ref{pro:comb1p})}]_1\ge 0$ and that $\sum_{i\in I} \beta_i b_i=1$. Let us denote $\alpha=\sum_{i\in I} \beta_i [a_{(i)}]$. Then,
$$ \alpha^T \phi =\sum_{i\in I} \beta_i [a_{(i)}]^T \phi = \sum_{i\in I} \beta_i b_i=1.$$
Thus, since $\phi \phi^T=\Phi$, from the above we obtain 
\begin{equation}\label{filineal}
 \phi \phi^T \alpha= \Phi \alpha = \phi.
\end{equation}

\noindent On the other hand, $1=\alpha^T \phi \phi^T \alpha= \alpha^T \Phi  \alpha$, and hence
$$ \overline{\Phi}:= \left[\begin{array}{cc} 1 & \phi^T \\ \phi & \Phi \end{array} \right]= \left[\begin{array}{cc} 1 & \alpha^T \Phi \\ \Phi \alpha & \Phi \end{array} \right] = (\alpha \; I)^T \Phi (\alpha I),$$
and since $\alpha\ge 0$, if $\Phi \in \mathcal{C}^*$ then $\overline{\Phi} \in \mathcal{C}^*$.
To prove the converse, it suffices to recall that the principal submatrices of completely positive matrices are completely positive. Therefore, if $\overline{\Phi} \in \mathcal{C}^*$ then $\Phi \in \mathcal{C}^*$. 

To obtain the formulation (\ref{pro:comb-of}) we only need to replace any occurrence of $\phi$ in \ref{DOMP-QUAD} by $\Phi [a_{(\ref{pro:comb1p})}]_1$ and observe that, in terms of $\Phi [a_{(\ref{pro:comb1p})}]_1$, $W_k=\sum_{\ell =1}^n \rho_{1\ell k}$, for all $k=1,\ldots,n$, $P_{jk}=\sum_{\ell = 1}^n q_{jk1\ell}$, for all $j,k=1,\ldots,n$ and $X_{j\ell}=\sum_{r=1}^n v_{j\ell 1r}$, for all $j,\ell =1,\ldots,n$.

To complete the proof we apply the assertion (ii) of Theorem \ref{th:3.1}.

Summarizing, we have obtained that formulation \ref{pro:comb-of} is exact for \DOMP and it is linear in $\Phi \in \mathcal{C}^*_{(3n^2+4n)\times (3n^2+4n)}$. }
 \hfill $\Box$
}
\end{proof}

%This proves that \ref{pro:comb-of} is an exact continuous linear formulation of \DOMP in the cone of completely positive matrices.

An explicit reformulation of \ref{pro:comb-of}, namely (\ref{pro:explicit}),  can be found in the Appendix.

\begin{propo} \label{cor:conv-of}
The objective function of \ref{pro:comb-of} can be equivalently written for any $\mu \in [0,1]$ as:
\begin{equation} \label{eq:cor31}
\mu \langle F,V \rangle+(1-\mu) \sum_{k=1}^n \lambda_k \sum_{\ell=1}^n \rho_{1\ell k}+1/2\langle D,Q \rangle+1/2\langle H,U \rangle.
\end{equation}
\end{propo}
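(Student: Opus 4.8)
The plan is to exhibit, as an identity valid throughout the feasible region of \ref{pro:comb-of}, the equality
$$\langle F,V\rangle\;=\;\sum_{k=1}^n\lambda_k\sum_{\ell=1}^n\rho_{1\ell k},$$
after which the displayed objective (\ref{eq:cor31}) is, for every $\mu$, just a trivial affine combination of two copies of the same number, plus the two terms $\tfrac12\langle D,Q\rangle+\tfrac12\langle H,U\rangle$ that are common to both expressions and therefore play no role.

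First I would recall, from (\ref{eq:of-con-g}) together with the fact that the block $V=rvec(P)\,rvec(X)^T$ has entry $v_{jkj\ell}$ representing $P_{jk}X_{j\ell}$, that
$$\langle F,V\rangle=\sum_{k=1}^n\lambda_k\sum_{j=1}^n\sum_{\ell=1}^n c_{j\ell}\,v_{jkj\ell},$$
i.e. $\langle F,V\rangle$ is exactly the ``$v$-term'' of the objective of \ref{pro:comb-of} (the factor $1/2$ being absorbed in the normalisation used in Theorem \ref{th:3.2}). Next I would invoke the linear equality that \ref{pro:comb-of} explicitly contains, namely
$$\sum_{\ell=1}^n\rho_{1\ell k}-\sum_{\ell=1}^n\sum_{j=1}^n c_{j\ell}\,v_{jkj\ell}=0,\qquad k=1,\dots,n,$$
which is just constraint (\ref{pro:comb1p2}), $W_k=\sum_j P_{jk}\sum_\ell c_{j\ell}X_{j\ell}$, rewritten in the matrix variables, since $\sum_\ell\rho_{1\ell k}$ is precisely the $\Phi$-representation of $W_k$ used in \ref{pro:comb-of}. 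Multiplying the $k$-th such equation by $\lambda_k\ge 0$ and summing over $k$ yields the claimed identity $\sum_{k}\lambda_k\sum_{\ell}\rho_{1\ell k}=\langle F,V\rangle$ at every feasible point.

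Given that identity, for any $\mu$ (in particular any $\mu\in[0,1]$) one has $\mu\langle F,V\rangle+(1-\mu)\sum_k\lambda_k\sum_\ell\rho_{1\ell k}=\langle F,V\rangle$, so (\ref{eq:cor31}) equals $\langle F,V\rangle+\tfrac12\langle D,Q\rangle+\tfrac12\langle H,U\rangle=\tfrac12\langle G,\Phi\rangle$, which by (\ref{eq:ofp2}) is the objective of \ref{pro:comb-of}. I do not anticipate any genuine obstacle here; the only point that needs care is the index bookkeeping — confirming that $\sum_\ell\rho_{1\ell k}$ is indeed the avatar of $W_k$ and that $v_{jkj\ell}$ is the avatar of $P_{jk}X_{j\ell}$, so that the constraint being used is literally one of those listed in \ref{pro:comb-of} — and once the dictionary between $\phi\phi^T$ and the blocks of $\Phi$ fixed around (\ref{eq:Phi}) is in force, the argument reduces to a single linear combination of constraints.
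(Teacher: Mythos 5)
Your argument is correct and is essentially the paper's own: both proofs split the linear cost term as $\mu\langle F,V\rangle+(1-\mu)\langle F,V\rangle$ and then replace the second copy by $\sum_{k}\lambda_k\sum_{\ell}\rho_{1\ell k}$, the only difference being that you invoke the constraint $\sum_{\ell}\rho_{1\ell k}=\sum_{j,\ell}c_{j\ell}v_{jkj\ell}$ of \ref{pro:comb-of} directly, while the paper reaches the same identity through $\langle F,V\rangle=\langle g,\phi\rangle=\sum_k\lambda_k W_k$ together with $\phi=\Phi[a_{(\ref{pro:comb1p})}]_1$, which gives $W_k=\sum_{\ell}\rho_{1\ell k}$. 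This is a cosmetic rearrangement of the same computation, so nothing further is needed.
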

\begin{proof} \rm
Recall that from (\ref{eq:ofp2}) the objective function of \ref{pro:comb-ofp2}satisfies:
\begin{align*}
\displaystyle \sum_{k=1}^n \lambda_k \sum_{j=1}^n P_{jk}\sum_{\ell =1} ^n c_{j \ell} X_{j\ell} +1/2\sum_{j=1}^n\sum_{k=1}^n\sum_{j'=1}^n\sum_{k'=1}^n  d_{jkj'k'}P_{jk}P_{j'k'} \\
+1/2\sum_{j=1}^n\sum_{\ell=1}^n\sum_{p=1}^n\sum_{q=1}^n  h_{j\ell pq}X_{j\ell}X_{pq} & =1/2 \langle G, \Phi \rangle\\
 &\hspace*{-2cm} =  \langle F,V\rangle+ 1/2 \langle D,Q\rangle +1/2\langle H,U\rangle.
\end{align*}
Next, using (\ref{eq:of-con-g}) we get
$$  \langle F,V \rangle= \sum_{k=1}^n \lambda_k\sum_{j=1}^n\sum_{\ell=1}^n c_{j\ell} P_{jk}X_{j\ell}=\sum_{k=1}^n \lambda_k W_k=\langle g, \phi \rangle.$$
%Therefore the objective function of \DOMP is linear in $\Phi$ and $\phi$. Moreover, we can also consider any convex combination of those two terms (using the above set of matrix variables) for any choice of $\mu\in [0,1]$. Thus, the objective function of \ref{pro:comb-ofp2} can be also equivalently written as:
\rev{Then, for any $\mu\in [0,1]$, combining both expressions and using that $\Phi\alpha=\phi$ and therefore $W_k=\sum_{\ell=1}^n\rho_{1\ell k}$, for all $k=1,\ldots, n$, we get:}
{\small
\begin{align}
\langle F, V \rangle+1/2\langle D,Q \rangle+1/2\langle H,U \rangle=& \mu \langle F, V \rangle +(1-\mu) \langle g, \phi \rangle+1/2\langle D,Q \rangle+1/2\langle H,U \rangle \nonumber \\
= & 1/2 \langle D,Q \rangle+1/2\langle H,U \rangle+\mu \langle F, V \rangle +(1-\mu) \langle \mu, W \rangle \nonumber \\
=& 1/2 \langle D,Q \rangle+1/2\langle H,U \rangle+\mu \langle F, V \rangle \nonumber \\
& +(1-\mu) \langle \mu, (PW)^T [a_{(\ref{pro:comb1p})}]_1 \rangle \nonumber   \\
=&  1/2 \langle D,Q \rangle+1/2\langle H,U \rangle+\mu/2 \langle F, V \rangle \nonumber \\
&+(1-\mu) \sum_{k=1}^n \lambda_k \sum_{\ell=1}^n \rho_{1\ell k}. \label{pro:miqlc-of-ref2}
\end{align}
}
This is exactly the expression that we had to obtain.
 \hfill $\Box$
\end{proof}

Finally, we would like to remark that 7 out of 21 blocks of the matrix variables $\Phi$, namely $\y W\in \mathbb{R}^{n \times n}$, $\y \xi\in \mathbb{R}^{n \times n}$, $\y \eta\in \mathbb{R}^{n \times n^2}$, $ W \zeta\in \mathbb{R}^{n \times n}$, $\xi \eta \in \mathbb{R}^{n \times n^2}$, $\xi \zeta\in \mathbb{R}^{n \times n}$ and $\eta \zeta\in \mathbb{R}^{n^2 \times n}$,  never appear explicitly in any constraint but in $\Phi \in \mathcal{C}^*_{(3n^2+4n)\times (3n^2+4n)}$.
In addition, we observe that  Corollary \ref{cor:conv-of} shows that, according to (\ref{eq:cor31}), the objective function of \ref{pro:comb-of} is also valid for $\mu=0$ and $\mu=1$ which makes it possible to avoid $V$ or $\rho$ variables in the objective function. It is not clear whether these two final remarks may help in solving the problem or not.

\section{Concluding remarks\label{s:remarks}}
The results in this paper state, for the first time, the equivalence of a difficult $NP$-hard discrete location problem, namely \DOMP, with a continuous, convex problem. This new approach can be used to start new avenues of research by applying tools from continuous optimization to approximate or numerically solve some other hard discrete location problems, as for instance single allocation hub location problems and ordered median hub location problems with and without capacities, see e.g.  \cite{FPRCh2013,Puerto2011,PUERTO2016142}, to mention a few. The aim of this paper is not computational but it seems natural to consider some relaxations of  formulation \ref{pro:comb-of} to analyze the accuracy of their related bounds. This is beyond the scope of this contribution but will be the subject of a follow up paper.

\section*{Acknowledgements}

This research has been partially supported by Spanish Ministry of Econom{\'\i}a and  Competitividad/FEDER grants number
MTM2016-74983-C02-01. The author would like to thank Prof. I. Bomze from the University of Vienna for fruitful discussions that led to conclude this project.

\bibliographystyle{plainnat}
%\bibliography{mybibDP}

\begin{thebibliography}{20}
\providecommand{\natexlab}[1]{#1}
\providecommand{\url}[1]{\texttt{#1}}
\expandafter\ifx\csname urlstyle\endcsname\relax
  \providecommand{\doi}[1]{doi: #1}\else
  \providecommand{\doi}{doi: \begingroup \urlstyle{rm}\Url}\fi

\bibitem[Bai et~al.(2016)Bai, Mitchell, and Pang]{Bai2016}
L.~Bai, J.~E. Mitchell, and J.-S. Pang.
\newblock On conic qpccs, conic qcqps and completely positive programs.
\newblock \emph{Mathematical Programming}, 159\penalty0 (1):\penalty0 109--136,
  Sep 2016.

\bibitem[Blanco et~al.(2014)Blanco, Ali, and Puerto]{BHP2014}
V.~Blanco, S.~El Haj~Ben Ali, and J.~Puerto.
\newblock Revisiting several problems and algorithms in continuous location wih
  $l_\tau-$norm.
\newblock \emph{Computational Optimization and Applications}, 58\penalty0
  (3):\penalty0 563--595, 2014.

\bibitem[Boland et~al.(2006)Boland, Dom{\'\i}nguez-Mar{\'\i}n, Nickel, and
  Puerto]{Boland2006}
N.~Boland, P.~Dom{\'\i}nguez-Mar{\'\i}n, S.~Nickel, and J.~Puerto.
\newblock Exact procedures for solving the discrete ordered median problem.
\newblock \emph{Computers \& Operations Research}, 33\penalty0 (11):\penalty0
  3270--3300, 2006.

\bibitem[Bomze(2012)]{Bomze12}
I.~M. Bomze.
\newblock Copositive optimization - recent developments and applications.
\newblock \emph{European Journal of Operational Research}, 216\penalty0
  (3):\penalty0 509--520, 2012.

\bibitem[Burer(2009)]{Burer09}
S.~Burer.
\newblock On the copositive representation of binary and continuous nonconvex
  quadratic programs.
\newblock \emph{Math. Program.}, 120\penalty0 (2):\penalty0 479--495, 2009.

\bibitem[Burer(2012)]{Burer2012}
S.~Burer.
\newblock \emph{Copositive Programming}, chapter~8, pages 201--218.
\newblock Springer, 2012.
\newblock ISBN 978-1-4614-0769-0.
\newblock Handbook on Semidefinite, Conic and Polynomial Optimization, M.F.
  Anjos and J.B. Lasserre(Eds.).

\bibitem[Burer(2015)]{Burer2015}
S.~Burer.
\newblock A gentle, geometric introduction to copositive optimization.
\newblock \emph{Math. Program.}, 151\penalty0 (1):\penalty0 89--116, 2015.

\bibitem[Burer and Dong(2012)]{BURER2012203}
S.~Burer and H.~Dong.
\newblock Representing quadratically constrained quadratic programs as
  generalized copositive programs.
\newblock \emph{Operations Research Letters}, 40\penalty0 (3):\penalty0 203 --
  206, 2012.


\bibitem[Burkard(1984)]{BURKARD1984283}
R.E. Burkard
\newblock Quadratic assignment problems.
\newblock \emph{European Journal of Operational Research}, 15\penalty0 (3):\penalty0 283--289, 1984.


\bibitem[D{\"u}r(2010)]{Dur10}
M.~D{\"u}r.
\newblock \emph{Copositive Programming -- a Survey}.
\newblock Springer, Berlin, Heidelberg, 2010.
\newblock ISBN 978-3-642-12597-3.
\newblock In: Diehl M., Glineur F., Jarlebring E., Michiels W. (eds) Recent
  Advances in Optimization and its Applications in Engineering.

\bibitem[Fern\'{a}ndez et~al.(2013)Fern\'{a}ndez, Puerto, and
  Rodr\'{\i}guez-Ch\'{\i}a]{FPRCh2013}
E.~Fern\'{a}ndez, J.~Puerto, and A.~M. Rodr\'{\i}guez-Ch\'{\i}a.
\newblock On discrete optimization with ordering.
\newblock \emph{Annals of Operations Research}, 207\penalty0 (1):\penalty0
  83--96, 2013.

\bibitem[Kalcsics et~al.(2003)Kalcsics, Nickel, and Puerto]{Kalcsics2003}
J.~Kalcsics, S.~Nickel, and J.~Puerto.
\newblock Multifacility ordered median problems on networks: A further
  analysis.
\newblock \emph{Networks}, 41\penalty0 (1):\penalty0 1--12, 2003.

\bibitem[Labb\'e et~al.(2017)Labb\'e, Ponce, and Puerto]{Labbe2016}
M.~Labb\'e, D.~Ponce, and J.~Puerto.
\newblock A comparative study of formulations and solution methods for the
  discrete ordered p-median problem.
\newblock \emph{Computers \& Operations Research}, 78:\penalty0 230 -- 242,
  2017.

\bibitem[Mar{\'{\i}}n et~al.(2009)Mar{\'{\i}}n, Nickel, Puerto, and
  Velten]{Marin2009}
A.~Mar{\'{\i}}n, S.~Nickel, J.~Puerto, and S.~Velten.
\newblock A flexible model and efficient solution strategies for discrete
  location problems.
\newblock \emph{Discrete Applied Mathematics}, 157\penalty0 (5):\penalty0
  1128--1145, 2009.

\bibitem[Nickel(2001)]{Nickel2001}
S.~Nickel.
\newblock Discrete ordered weber problems.
\newblock In \emph{Operations Research Proceedings 2000}, pages 71--76.
  Springer Verlag, 2001.

\bibitem[Nickel and Puerto(2005)]{Nickel2005}
S.~Nickel and J.~Puerto.
\newblock \emph{Location Theory: A Unified Approach.}
\newblock Springer Verlag, 2005.

\bibitem[Pe{\~{n}}a et~al.(2015)Pe{\~{n}}a, Vera, and Zuluaga]{Pena2015}
J.~Pe{\~{n}}a, J.~C. Vera, and L.~F. Zuluaga.
\newblock Completely positive reformulations for polynomial optimization.
\newblock \emph{Mathematical Programming}, 151\penalty0 (2):\penalty0 405--431,
  Jul 2015.

\bibitem[Puerto and Fern{\'a}ndez (1999)]{Puerto1999}
J.~Puerto and F.~R.~Fern{\'a}ndez. 
\newblock Multi-criteria minisum facility location problems.
\newblock \emph{Journal of Multi-Criteria Decision Analysis.} 8\penalty0 (5): \penalty0 268--280, 1999.

\bibitem[Puerto et~al.(2011)Puerto, Ramos, and
  Rodr\'{\i}guez-Ch\'{\i}a]{Puerto2011}
J.~Puerto, A.~B. Ramos, and A.~M. Rodr\'{\i}guez-Ch\'{\i}a.
\newblock Single-allocation ordered median hub location problems.
\newblock \emph{Computers \& Operations Research.}, 38\penalty0 (2):\penalty0
  559--570, feb 2011.

\bibitem[Puerto et~al.(2016)Puerto, Ramos, Rodr\'{\i}guez-Ch\'{\i}a, and
  S\'anchez-Gil]{PUERTO2016142}
J.~Puerto, A.~B. Ramos, A.~M. Rodr\'{\i}guez-Ch\'{\i}a, and M.~C. S\'anchez-Gil.
\newblock Ordered median hub location problems with capacity constraints.
\newblock \emph{Transportation Research Part C: Emerging Technologies},
  70:\penalty0 142 -- 156, 2016.

\bibitem[Stanimirovic et~al.(2007)Stanimirovic, Kratica, and
  Dugosija]{Stanimirovic2005}
Z.~Stanimirovic, J.~Kratica, and D.~Dugosija.
\newblock Genetic algorithms for solving the discrete ordered median problem.
\newblock \emph{European Journal of Operational Research}, 182:\penalty0
  983--1001, 2007.

\bibitem[{Xu} and {Burer}(2016)]{XuBurer2016}
G.~{Xu} and S.~{Burer}.
\newblock {A Copositive Approach for Two-Stage Adjustable Robust Optimization
  with Uncertain Right-Hand Sides}.
\newblock \emph{ArXiv e-prints}, September 2016.

\end{thebibliography}

\section*{Appendix: An explicit formulation of \ref{pro:comb-of}}
\rev{One can check that
\begin{align*}
\Phi [a_{(\ref{pro:comb1p})}]_1 = & \Big[\stackrel{(PP)}{\overbrace{\sum_{\ell=1}^n q_{111\ell},\ldots, \sum_{\ell=1}^n q_{nn1\ell}},} \;
\stackrel{(PX)}{\overbrace{\sum_{\ell=1}^n v_{111\ell},\ldots, \sum_{\ell=1}^n v_{nn1\ell}},} \;
\stackrel{(P\y)}{\overbrace{\sum_{\ell=1}^n (P\y)_{1\ell 1}, \ldots, \sum_{\ell=1}^n (P\y)_{1\ell n}},}  \\
& \stackrel{(PW)}{\overbrace{\sum_{\ell=1}^n (PW)_{1\ell 1}, \ldots, \sum_{\ell=1}^n (PW)_{1\ell n}},}\;
\stackrel{(P\xi)}{\overbrace{\sum_{\ell=1}^n (P\xi)_{1\ell 1}, \ldots, \sum_{\ell=1}^n(P\xi)_{1\ell n}},}\;
\stackrel{(P\eta)}{\overbrace{\sum_{\ell=1}^n (P\eta)_{1\ell 11}, \ldots, \sum_{\ell=1}^n (P\eta)_{1\ell nn}},}\; \\
& \stackrel{(P\zeta)}{\overbrace{\sum_{\ell=1}^n (P\zeta)_{1\ell 1}, \ldots, \sum_{\ell=1}^n (P\zeta)_{1\ell n}}}\Big]^T.
\end{align*}

Next, we obtain the explicit formulation of \ref{pro:comb-of}, which is obtained from the original formulation replacing the original variables $\phi$ by its linear expression in terms of $\Phi$, namely $\phi=\Phi [a_{(\ref{pro:comb1p})}]_1$ (see theorems \ref{th:3.1}  and \ref{th:3.2}).

%comes from the reformulation of \ref{quadratic1} in terms of the matrix variables $\Phi$ and using that $\Phi [a_{(\ref{pro:comb1p})}]_1=\phi$ (see theorems \ref{th:3.1}  and \ref{th:3.2}).

Indeed, this reformulation requires to include: 1)  the linear constraints that come from \ref{pro:comb-ofp2} rewritten using that $\Phi [a_{(\ref{pro:comb1p})}]_1=\phi$, 2) the squares of those constraints in the matrix variable $\Phi$, 3) the quadratic constraints written in the matrix variables $\Phi$ and 4)  $\Phi\in \mathcal {C}^*$, the cone of completely positive matrices of the appropriate dimension.

In the following we check that the four conditions mentioned above give us the constraints that appear in the explicit representation of \ref{pro:comb-of} included below. Indeed,

\begin{enumerate}
\item Constraints (\ref{pro:comb-2}) are $[a_{(\ref{pro:comb1p})}]^T_i \Phi [a_{(\ref{pro:comb1p})}]_1=1$ for all $i=1,\ldots,n$. Analogously, constraints (\ref{pro:comb-3}) are $[a_{(\ref{pro:comb2p})}]^T_i \Phi [a_{(\ref{pro:comb1p})}]_1=1$ for all $i=1,\ldots,n$; constraint (\ref{pro:comb-5}) is $[a_{(\ref{pro:comb4p})}]^T\Phi [a_{(\ref{pro:comb1p})}]_1=p$; constraints (\ref{pro:comb-6}) are $[a_{(\ref{pro:comb5p})}]^T_i \Phi [a_{(\ref{pro:comb1p})}]_1=1$ for all $i=1,\ldots,n$; constraints (\ref{pro:comb-9}) are $[a_{(\ref{pro:comb1p3})}]^T_k \Phi [a_{(\ref{pro:comb1p})}]_1=1$ for all $k=1,\ldots,n-1$; constraints (\ref{pro:comb-7mod}) are $[a_{(\ref{pro:comb6pmodeq})}]^T_{\ell} \Phi [a_{(\ref{pro:comb1p})}]_1=1$ for all $\ell=1,\ldots,n$;
    %constraint (\ref{pro:comb-10}) is $[a_{(\ref{pro:comb1p7})}]^T \Phi [a_{(\ref{pro:comb1p})}]_1=1$;
    and constraints (\ref{pro:comb-11}) are $[a_{(\ref{pro:comb1p8})}]^T_{jk} \Phi [a_{(\ref{pro:comb1p})}]_1=1$ for all $j,k=1,\ldots,n$. This proves that the block $A\phi=A\Phi [a_{(\ref{pro:comb1p})}]_1=b$ appears in \ref{pro:explicit}.
\item Constraints (\ref{pro:comb1xcomb1})-(\ref{pro:comb8xcomb8}) are obtained squaring (\ref{pro:comb1p}),(\ref{pro:comb2p}), (\ref{pro:comb4p}),(\ref{pro:comb5p}), (\ref{pro:comb1p3}), %(\ref{pro:comb1p7}),
     (\ref{pro:comb6pmodeq})  and  (\ref{pro:comb1p8}), respectively. This proves that $diag(A^T\Phi A)=b\circ b$ also appears in \ref{pro:explicit}.
\item Constraints (\ref{pro:comb-QUAD1}), (\ref{pro:comb-QUAD2}) and (\ref{pro:comb-QUAD3}) are the quadratic constraints (\ref{pro:comb1p2}), (\ref{pro:comb2p2}) and (\ref{pro:combq2p2})  replacing quadratic terms by the corresponding matrix variables in $\Phi$ (recall that $\Phi$ was introduced in (\ref{eq:Phi})). 
    %Finally, (\ref{pro:comb-bin-pij})-(\ref{pro:comb-bin-xij}) come from the binary character of the variables in \ref{pro:comb-ofp2}.
    Hence, \ref{pro:explicit} includes the quadratic constraints in \ref{pro:comb-ofp2} written in the terms of the matrix variables $\Phi$.
\item $\Phi \in \mathcal{C}^*_{(3n^2+4n)\times (3n^2+4n)}$, the appropriate dimension of the space of variables.
\end{enumerate}

We note in passing that we do not need to add the constraint $[a_{(\ref{pro:comb1p})}]^T_1 \Phi [a_{(\ref{pro:comb1p})}]_1=1$  since it is already included; indeed it is the first constraint in the block (\ref{pro:comb-2}).}

An explicit reformulation of \ref{pro:comb-of} as a completely positive convex problem in the essential matrix variables $\Phi$ is the following
\begin{align}
\min \quad &   \langle F, V \rangle+ 1/2 \langle D,Q \rangle+1/2\langle H,U \rangle   \label{pro:explicit} \tag{CP-DOMP-Explicit}\\
%\mbox{s.t. } & (\ref{pro:comb1p}),(\ref{pro:comb2p}), (\ref{pro:comb4p}),(\ref{pro:comb5p}) \label{pro:comb-lista}\\
\mbox{s.t. } & \sum_{j=1}^n\sum_{\ell=1}^n q_{ij1\ell}=1, \quad \forall i=1,\ldots,n \label{pro:comb-2}\\
& \sum_{k=1}^n\sum_{\ell=1}^nq_{ki1\ell}=1, \quad \forall i=1,\ldots,n \label{pro:comb-3}\\
& \sum_{k=1}^n\sum_{\ell=1}^n (P\y)_{1k\ell}=p , \quad \label{pro:comb-5}\\
& \sum_{j=1}^n\sum_{\ell= 1}^n v_{ij1\ell}=1, \quad \forall i=1,\ldots,n \label{pro:comb-6}\\
& \sum_{j=1}^n\sum_{k=1}^n v_{1kj\ell}-(n-p+1)\sum_{k=1}^n (P\y)_{1k\ell}+\sum_{k=1}^n (P\zeta)_{1k\ell}=0, \quad \forall \ell=1,\ldots,n \label{pro:comb-7mod} \\
& \sum_{\ell=1}^n \rho_{1\ell k}-\sum_{\ell=1}^n \rho_{1\ell k+1}+\sum_{\ell= 1}^n (P\xi)_{1\ell k}=0, \quad \forall k=1,\ldots, n-1 \label{pro:comb-9}\\
%& \sum_{k=1}^n\sum_{\ell=1}^n \rho_{1\ell k}-\sum_{j=1}^n\sum_{\ell=1}^n(\sum_{r=1}^n v_{1rj\ell}) c_{j\ell}=0 \label{pro:comb-10}\\
& \sum_{\ell=1}^n \rho_{1\ell k} -\sum_{\ell=1}^n(\sum_{r=1}^n v_{1rj\ell})c_{j\ell}+\sum_{\ell=1}^n c_{j\ell} (1-\sum_{r=1}^n q_{jk1r})  \nonumber \\
& +\sum_{r=1}^n (P\eta)_{1rjk}=0, \quad \forall j,k=1,\ldots,n \label{pro:comb-11}\\
& \sum_{i=1}^n q_{ikik}=1, \quad \forall k=1,\ldots,n \label{pro:comb1xcomb1}\\
&\sum_{i=k}^n q_{ikik}=1, \quad \forall i=1,\ldots,n \label{pro:comb2xcomb2}\\
&\displaystyle  \sum_{i=1}^n\sum_{j=1}^n\sigma_{ij}=p^2 \label{pro:comb4xcomb4}\\
& \displaystyle \sum_{k=1}^n\sum_{l=1}^n u_{jkj\ell}=1, \quad \forall j=1,\ldots,n \label{pro:comb5xcomb5}\\
& \omega_{kk}-2\omega_{k,k+1}+2\delta_{kk}+\omega_{k+1,k+1}-2\delta_{k,k+1}+\psi_{kk}=0, \quad \forall k=1,\dots,n-1 \label{pro:comb3xcomb3}\\
& \displaystyle \sum_{r=1}^n\sum_{s=1}^n u_{r\ell s\ell}-2(n-p+1)\sum_{r=1}^n \chi_{r\ell\ell}+2 \sum_{r=1}^n \tau_{r \ell \ell}-2(n-p+1)\beta_{\ell \ell} \nonumber \\
&\displaystyle  +(n-p+1)^2\sigma_{\ell\ell}+z_{\ell\ell}=0, \quad \forall \ell=1,\dots,n \label{pro:comb6xcomb6} \\
&  \displaystyle \sum_{r=1}^n\sum_{s=1}^n c_{rs} u_{rsrs}+\sum_{\scriptsize \begin{array}{c} i,j,r,s=1\\ (i,j)\neq (r,s)\end{array}}^n  c_{ij} c_{rs} u_{ijrs}-2\sum_{i,j=1} n c_{ij} (\sum_{\ell =1}^n \gamma_{ij\ell})+\sum_{r,s=1} ^n\omega_{rs}=0 \label{pro:comb7xcomb7}\\
&\displaystyle  (\sum_{\ell=1} ^n c_{j\ell})^2 q_{jkjk}+ \sum_{\ell=1} ^n c_{j\ell}^2 u_{j\ell j\ell}-2 (\sum_{\ell=1} ^n c_{j\ell})\sum_{\ell=1}^nc_{j\ell} v_{jkj\ell}\nonumber \\
&\displaystyle  +2 \sum_{\ell=1}^nc_{j\ell} \rho_{jkk}+2(\sum_{\ell=1}^nc_{j\ell})\nu_{jkjk}+2\sum_{r<s}^n c_{jr} c_{js} u_{jrjs}-2\sum_{\ell=1}^nc_{j\ell}\gamma_{j\ell k} \nonumber \\
&\displaystyle  -2\sum_{\ell=1}^n c_{j\ell}\kappa_{j\ell jk}+2\epsilon_{kjk}+\omega_{kk}+{\pi}_{jkjk}= (\sum_{\ell=1} ^n c_{j\ell})^2, \quad \forall j,k=1,\ldots,n \label{pro:comb8xcomb8}\\
& \displaystyle \sum_{\ell=1}^n \rho_{1\ell k} -\sum_{\ell=1}^n\sum_{j=1}^n c_{j\ell} v_{jkj\ell }=0, \quad \forall k=1,\ldots,n, \label{pro:comb-QUAD1}\\
& \sum_{j,k=1}^n (\sum_{\ell=1}^n q_{1\ell jk}-q_{jkjk})=0  \label{pro:comb-QUAD2} \\
& \sum_{j,\ell=1}^n (\sum_{r=1}^n v_{j\ell 1r} - u_{j\ell j\ell})=0 \label{pro:comb-QUAD3} \\
%& \sum_{\ell =1}^n q_{ij1\ell}=q_{ijij}, \quad \forall \; i,j= 1,\ldots,n \label{pro:comb-bin-pij} \\
%&\sum_{\ell =1}^n v_{ij1\ell}=u_{ijij}, \quad \forall \; i,j= 1,\ldots,n \label{pro:comb-bin-xij} \\
%& (\ref{con:Q}),(\ref{con:U})\\
%& P,X\in [0,1]^{n^2},y\in [0,1]^n, w \in \mathbb{R}^n_+, \label{pro:comb-var} \\
& \Phi \in \mathcal{C}^*_{(3n^2+4n)\times (3n^2+4n)} \label{pro:comb-cp}
\end{align}

\end{document}